\documentclass[10pt]{amsart}
\usepackage[normalem]{ulem}
\usepackage[usenames]{color}
\usepackage{amsrefs}
\usepackage{amssymb}
\usepackage{pstricks-add}

 \newcommand{\R}{\mathbb R}
 \newcommand{\Z}{\mathbb Z}
\newcommand{\bs}{\backslash}\newcommand{\ep}{\varepsilon}

\DeclareMathOperator{\Int}{Int}

\theoremstyle{plain} \newtheorem{thm}{Theorem}
\newtheorem{cor}[thm]{Corollary} 
\newtheorem{lemma}[thm]{Lemma}

\theoremstyle{definition} \newtheorem{defn}[thm]{Definition}

\newtheorem{ex}[thm]{Example} 

\theoremstyle{remark} 

\DeclareMathOperator{\gr}{GR}
\DeclareMathOperator{\crec}{CR}
\newcommand{\scrwo}{\operatorname{SCR}}

\DeclareMathOperator{\mane}{M}
\newcommand{\prodset}[1]{{\mathcal{#1}}}
\DeclareMathOperator{\id}{Id}
\newcommand{\diag}{\Delta_X}
\DeclareMathOperator{\Fix}{Fix}
\DeclareMathOperator{\nw}{NW}

\newcommand{\tube}{\prodset{V}}

\DeclareMathOperator{\diam}{diam}

\newcommand{\projD}{\overline D}
\newcommand{\liftd}{D_{d_X,d_Y}}
\newcommand{\scrdf}{\scrwo_{d_X}(f)}
\newcommand{\scrdg}{\scrwo_{d_Y}(g)}
\newcommand{\mset}{\mathcal M}
\newcommand{\gm}{g_\mset}
\newcommand{\lyfg}{\Theta}
\newcommand{\lyf}{\theta}
\newcommand{\lyg}{\phi}
\DeclareMathOperator{\per}{Per}

\numberwithin{thm}{section}

\begin{document}

\title{Generalized Recurrence and the Nonwandering Set for Products}

  \author{Jim Wiseman}    \address{Agnes Scott College \\ Decatur, GA 30030} \email{jwiseman@agnesscott.edu}

\thanks{This work was supported by a grant from the Simons Foundation (282398, JW)}
\keywords{Generalized recurrence, chain recurrence, strong chain recurrence, nonwandering set, recurrence for product maps}

\subjclass[2010]{37B20 (Primary), 37B05, 37B35 (Secondary)}

\begin{abstract}
For continuous maps of compact metric spaces $f:X\to X$ and $g:Y\to Y$ and for various notions of topological recurrence, we study the relationship between recurrence for $f$ and $g$ and recurrence for the product map $f\times g:X\times Y \to X\times Y$.  For the generalized recurrent set $\gr$, we see that $\gr(f\times g)=\gr(f)\times\gr(g)$.  For the nonwandering set $\nw$, we see that $\nw(f\times g)\subset \nw(f)\times\nw(g)$ and give necessary and sufficient conditions on $f$ for equality for every $g$.  We also consider product recurrence for the chain recurrent set, the strong chain recurrent set, and the Ma\~n\'e set.
\end{abstract}

\maketitle

\section{Introduction}

Let  $f:X\to X$ and $g:Y \to Y$ be continuous maps of compact metric spaces.  We are interested in the relationship between recurrence for $f$ and $g$ and recurrence for the product map $f\times g:X\times Y \to X \times Y$, and in how that relationship varies depending on which notion of recurrence we consider.

The strongest notion of recurrence is periodicity.
It is clear that $\per(f\times g)$, the set of periodic points for $f\times g$, is equal to $\per(f)\times\per(g)$.
A slightly weaker condition is that a point is \emph{(positively) recurrent} if it is in its own $\omega$-limit set.  The question of whether the positive recurrent set of a given product is equal to the product of the positive recurrent sets
has been well studied and has led to some very deep and interesting mathematics; see \cite{ARec} and \cite{OZ} and the references therein.
In this paper, we consider the corresponding question for several less restrictive notions of recurrent set, most importantly the generalized recurrent set and the nonwandering set.

The interesting dynamics occurs on the nonwandering set, so in order to understand the relationship between the dynamics of a product map and the dynamics of the original maps, we need to understand the nonwandering set; we give necessary and sufficient conditions (Theorem~\ref{thm:prodnw}) for a point $x\in X$ to be product nonwandering, that is, for $(x,y)$ to be nonwandering  for $f\times g$ for any $g$ and any nonwandering point $y\in Y$.
Auslander's generalized recurrent set $\gr(f)$ (defined originally for flows (see \cite{Auslander}), and extended to maps  (see \cites{A,AA})) is a larger and in many ways more dynamically natural set, particularly for understanding Lyapunov functions; see \cite{FP} and the references in \cite{GRSCR}.  We show that $\gr(f\times g) = \gr(f)\times\gr(g)$ (Theorem~\ref{thm:main}).
The same is clearly true for the chain recurrent set, which reflects a still broader notion of recurrence.

We also consider product recurrence for Easton's strong chain recurrent set and Fathi and Pageault's Ma\~n\'e set.  These results come up for the most part in the study of the generalized recurrent set, but are also of independent interest.

In section~\ref{sect:defns}, we give definitions and background information for the various notions of recurrence and for metrics on the product space.  In section~\ref{sect:results}, we state and prove our results.

\section{Definitions and background}\label{sect:defns}

Throughout the paper, let $f:X\to X$ and $g:Y \to Y$ be continuous maps of compact metrizable spaces; unless stated otherwise, we will use the metrics $d_X$ and $d_Y$ respectively.    Let $B_d(x;\ep)$  be the closed $\ep$-ball around $x$, $B_d(x;\ep) =\{x'\in X : d(x,x')\le\ep\}$. 

\subsection{Recurrent sets}

\begin{defn}
A point $x \in X$ is \emph{nonwandering} for $f$ if for any neighborhood $U$ of $X$, there exists an $n>0$ such that $f^n(U) \cap U \ne \emptyset$.  We denote by $\nw(f)$ the set of nonwandering points.
\end{defn}

\begin{defn} 
An {\em $(\ep,f,d_X)$-chain} (or {\em $(\ep,d_X)$-chain}, if it is clear what the map is, or \emph{$\ep$-chain}, if the metric is also clear) of length $n$  from $x$ to $x'$ is a sequence $(x=x_0, x_1, \dots, x_n=x')$ such that $d_X(f(x_{i-1}),x_i)\le\ep$ for $i=1,\dots,n$.  A point $x$ is {\em chain recurrent} if for every $\ep>0$, there is an $\ep$-chain from $x$ to itself.  We denote by $\crec(f)$ the set of chain recurrent points.  (Chain recurrence is independent of the choice of metric; see, for example, \cite{Franks}.)
\end{defn}


The following definition is due to Easton~\cite{E}.
\begin{defn} 
A {\em strong $(\ep,f,d_X)$-chain} (or {\em strong $(\ep,d_X)$-chain} or \emph{strong $\ep$-chain})  from $x$ to $x'$ is a sequence $(x=x_0, x_1, \dots, x_n=x')$ such that the sum of the errors is bounded by $\ep$, that is, $\sum_{i=1}^n d_X(f(x_{i-1}),x_i)\le\ep$.  A point $x$ is {\em $d_X$-strong chain recurrent} (or \emph{strong chain recurrent}) if for every $\ep>0$, there is a strong $(\ep,d_X)$-chain from $x$ to itself.  We denote the set of strong chain recurrent points by $\scrwo_{d_X}(f)$.  
\end{defn}

The strong chain recurrent set does depend on the choice of metric; see, for example, \cite{Y}.
One way to eliminate this dependence  is to take the intersection over all possible choices.  This leads to the following definition.

\begin{defn}[\cite{FP}]
The \emph{generalized recurrent set $\gr(f)$} is $\bigcap_{d_X'} \scrwo_{d_X'}(f)$, where the intersection is over all metrics $d_X'$ compatible with the topology of $X$.  
\end{defn}

We write $x_1\sim_f x_2$ if for any $\ep>0$ and any compatible metric $d_X'$ there is a strong $(\ep,f,d_X')$-chain from $x_1$ to $x_2$ and one from $x_2$ to $x_1$; then $\sim_f$ is a closed relation.

There are other, equivalent definitions of the generalized recurrent set; see \cite{FP,GRSCR,AA,A,Auslander}.  In particular, $\gr(f)$ was originally defined as the set of  points $x\in X$ such that all Lyapunov functions are constant on the orbit of $x$ \cite{AA,A,Auslander}; see section~\ref{sect:results}.

Another way to eliminate the dependence of the strong chain recurrent set on the choice of metric is to take the union over all possible choices.

\begin{defn}[\cite{FP}]
The \emph{Ma\~n\'e set $\mane(f)$} is $\bigcup_{d_X'} \scrwo_{d_X'}(f)$, where the  union is over all metrics $d_X'$ compatible with the topology of $X$.  
\end{defn}

We will need an equivalent definition of the Ma\~n\'e set.
We begin with some notation.  Let  $\diag$ be the diagonal in $X\times X$, $\diag=\{(x,x):x \in X\}$.  
   Let $\tube_{d_X}(\ep)$ (or $\tube(\ep)$) be the closed $\ep$-neighborhood of the diagonal $\diag$ in $X\times X$, $\tube_{d_X}(\ep) = \{(x_1,x_2) : d_X(x_1,x_2)\le\ep\}$.
  
  For $\prodset{N}\subset X\times X$, 
  we denote by $\prodset{N}^n$ the $n$-fold composition of $\prodset{N}$ with itself, $\prodset{N}\circ\prodset{N}\cdots\circ\prodset{N}$, that is, 
  \begin{align*}
  \prodset{N}^n = & \{(x,x') : \text{there exists $z_0=x,z_1,\ldots,z_n=x'\in X$} \\ & \text{ such that $(z_{i-1},z_i)\in\prodset{N}$ for $i=1,\ldots,n$}\}.
  \end{align*}

\begin{defn}\label{defn:nchain}
Let $\prodset{N}$ be a neighborhood of $\diag$.  An \emph{$(\prodset{N},f)$-chain} (or simply \emph{$\prodset{N}$-chain} if the map is clear) from $x$ to $x'$ is a sequence of points $(x=x_0, x_1, \dots, x_n=x')$ in $X$ such that $(f(x_{i-1}),x_i) \in \prodset{N}$ for $i=1,\ldots,n$.
\end{defn}

Thus $(x,x')\in \prodset{N}^n$ exactly when there is an $(\prodset{N},\id)$-chain of length $n$ from $x$ to $x'$, where $\id$ is the identity map.

\begin{thm}[\cite{GRSCR}*{Theorem~3.3}]\label{thm:altdefmane}
A point $x$ is in $\mane(f)$ if and only if for any closed neighborhood $\prodset{D}$ of the diagonal in $X\times X$, there exist a closed symmetric neighborhood $\prodset{N}$ of the diagonal and an integer $n>0$ such that $\prodset{N}^{3^n} \subset \prodset{D}$ and there is an $(\prodset{N},f)$-chain of length $n$ from $x$ to itself.
\end{thm}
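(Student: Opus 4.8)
The plan is to prove the two implications separately, using the standard dictionary between compatible metrics and decreasing sequences of neighborhoods of $\diag$, with the factor $3^n$ serving as the conversion constant (it matches the tripling in the Frink metrization lemma and simultaneously absorbs a thin tube around the diagonal).

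For the forward implication, suppose $x\in\mane(f)$, so that $x\in\scrwo_{d'}(f)$ for some compatible metric $d'$, and fix a closed neighborhood $\prodset{D}$ of $\diag$. Since $\diag$ is compact, choose $\delta>0$ with $\tube_{d'}(\delta)\subset\prodset{D}$. Using strong chain recurrence, take a strong $(\delta/2,d')$-chain $(x=x_0,\dots,x_n=x)$, so that $\sum_{i=1}^n a_i\le\delta/2$ where $a_i:=d'(f(x_{i-1}),x_i)$. I would set $\eta=\delta/(2\cdot 3^n)$ and let $\prodset{N}$ be the union of the thin tube $\tube_{d'}(\eta)$ with the finitely many symmetrized ``jump'' pairs $(f(x_{i-1}),x_i)$, $(x_i,f(x_{i-1}))$; this is a closed symmetric neighborhood of $\diag$, and by construction the chain is an $(\prodset{N},f)$-chain of length $n$ from $x$ to itself. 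It remains to verify $\prodset{N}^{3^n}\subset\tube_{d'}(\delta)\subset\prodset{D}$.

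This verification is the crux. Given $(z_0,z_L)\in\prodset{N}^{3^n}$, each edge of a realizing $\prodset{N}$-path is either a tube edge (length $\le\eta$) or one of the jump pairs (length $a_i$). The naive triangle estimate is too weak, since it counts repeated use of a jump and would only give $3^n\max_i a_i$. The key point is that a jump pair cannot ratchet: if some jump is traversed twice in the same direction, the sub-path between the two traversals is a loop that can be excised to give a shorter $\prodset{N}$-path with the same endpoints, and a forward/backward pair of uses cancels similarly. Passing to a path in which each jump is used at most once (of length at most $3^n$), the triangle inequality yields $d'(z_0,z_L)\le 3^n\eta+\sum_{i=1}^n a_i\le \delta/2+\delta/2=\delta$. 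Thus the thin tube contributes at most $\delta/2$ (this is exactly what the slack $3^n$ is calibrated to absorb), while the genuine jumps contribute at most the total error $\delta/2$.

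For the reverse implication, I would build a single compatible metric via the Frink metrization lemma, which produces from any sequence of symmetric neighborhoods $U_0\supset U_1\supset\cdots$ of $\diag$ with $U_{j+1}^3\subset U_j$ a metric $\rho$ with $U_j\subset\{(z,z'):\rho(z,z')\le 2^{-j}\}$, compatible when the $U_j$ form a neighborhood basis. The point is that a single pair $(\prodset{N},n)$ with $\prodset{N}^{3^n}\subset\prodset{D}$ supplies a finite Frink block $\prodset{N}^{3^n}\supset\prodset{N}^{3^{n-1}}\supset\cdots\supset\prodset{N}$, since $(\prodset{N}^{3^{j-1}})^3=\prodset{N}^{3^{j}}$. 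Fixing a closed neighborhood basis $\prodset{D}_1\supset\prodset{D}_2\supset\cdots$ of $\diag$, I would inductively apply the hypothesis to $\prodset{D}=U_m\cap\prodset{D}_k$ and append the resulting block, which preserves the Frink condition and drives the sequence into every $\prodset{D}_k$, so the $U_j$ form a neighborhood basis and $\rho$ is a compatible metric. Each recorded $(\prodset{N}_k,f)$-chain then sits at the deepest level $M_k$ of its block, so its $n_k$ steps each have $\rho$-length $\le 2^{-M_k}$ and the total $\rho$-error is $\le n_k 2^{-M_k}\le M_k 2^{-M_k}$, which tends to $0$ as $M_k\to\infty$. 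This gives strong $(\ep,\rho)$-chains from $x$ to itself for every $\ep>0$, so $x\in\scrwo_\rho(f)\subset\mane(f)$.

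The main obstacle is the composition estimate in the forward direction: everything hinges on showing that the finitely many non-diagonal jump pairs cannot accumulate displacement under $3^n$-fold relational composition, so that their contribution is controlled by $\sum_i a_i$ rather than by $3^n\max_i a_i$. The bookkeeping in the reverse direction (concatenating the Frink blocks so as to maintain $U_{j+1}^3\subset U_j$ while exhausting a neighborhood basis) is routine but must be done with some care.
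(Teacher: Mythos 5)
This theorem is imported from \cite{GRSCR}*{Theorem~3.3} and the present paper gives no proof of it, so there is nothing internal to compare against; judging your argument on its own terms, it is correct and is the standard route to this characterization. Your forward direction correctly identifies and resolves the one genuine difficulty --- that the finitely many exact jump pairs cannot accumulate under relational composition because repeated uses bound loops that can be excised, leaving at most one use of each jump plus at most $3^n$ tube steps of size $\delta/(2\cdot 3^n)$ --- and your reverse direction correctly exploits the fact that $(\prodset{N}^{3^{j-1}})^3=\prodset{N}^{3^j}$ to splice the data into a Frink/Alexandroff--Urysohn sequence $U_{j+1}^3\subset U_j$ interleaved with a countable neighborhood basis of the diagonal, yielding a compatible metric in which the recorded chains have total error $n_k2^{-M_k}\to 0$. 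I see no gap.
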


Any nonwandering point is clearly strong chain recurrent for any metric $d_X$, and any strong $\ep$-chain is clearly an $\ep$-chain, so we have the inclusions $\nw(f) \subset \gr(f) \subset \scrwo_{d_X}(f) \subset \mane(f) \subset \crec(f)$.

\subsection{Metrics on the product space}
Given two metrizable spaces $X$ and $Y$, we give the product space $X\times Y$ the product topology.  We will need to be able to go from metrics on $X$ and $Y$ to a metric on $X\times Y$, and vice versa.  There are many well-known ways of doing the former, all essentially equivalent; for convenience, we use the following definition.

\begin{defn}
 Let $d_X$ and $d_Y$ be metrics on $X$ and $Y$, respectively.  Denote by $\liftd$ the metric on $X\times Y$ given by $\liftd((x_1,y_1),(x_2,y_2))=d_X(x_1,x_2) + d_Y(y_1,y_2)$,
and observe that $\liftd$ induces the product topology on $X\times Y$.
\end{defn}

There does not seem to be much in the literature about   producing  metrics on $X$ and $Y$ from a metric on $X\times Y$, so we give the following construction.

\begin{defn}
Let $D$ be a metric on $X\times Y$.  Define a metric $\projD_X$ on $X$ by $\projD_X(x_1,x_2)= \max_{y\in Y}D((x_1,y),(x_2,y))$ and a metric $\projD_Y$ on $Y$ by $\projD_Y(y_1,y_2) = \max_{x\in X}D((x,y_1),(x,y_2))$.
\end{defn}

\begin{lemma}
$\projD_X$ is a metric on $X$, and if $D$ is compatible with the product topology on $X\times Y$, then $\projD_X$ is compatible with the topology on $X$.  The corresponding statements hold for $\projD_Y$.  Furthermore, $D((x_1,y_1),(x_2,y_2)) \le \projD_X(x_1,x_2) + \projD_Y(y_1,y_2)$ for all $x_1,x_2\in X$ and $y_1,y_2\in Y$.
\end{lemma}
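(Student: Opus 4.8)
The plan is to treat the three assertions in turn, dispatching the metric axioms and the final inequality quickly and concentrating on compatibility. Throughout I use that $Y$ is compact and that $D$, being compatible with the product topology, is continuous; hence for fixed $x_1,x_2$ the function $y\mapsto D((x_1,y),(x_2,y))$ is continuous on the compact space $Y$, so the maximum defining $\projD_X(x_1,x_2)$ is attained and $\projD_X$ is finite and well defined. Non-negativity and symmetry are inherited directly from $D$. If $\projD_X(x_1,x_2)=0$ then $D((x_1,y),(x_2,y))=0$ for every $y$, and evaluating at any single $y$ forces $x_1=x_2$. For the triangle inequality I would fix $y$, apply the triangle inequality for $D$ to $D((x_1,y),(x_3,y))$, bound each summand by the corresponding maximum over $y$, and then take the maximum of the left-hand side over $y$.

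For compatibility I would show that the identity map $\iota$ from $X$ with its original (compact) topology to $X$ equipped with $\projD_X$ is a homeomorphism. Since a continuous bijection from a compact space to a Hausdorff space is a homeomorphism, it suffices to prove $\iota$ is continuous, and for this it is enough that $x\mapsto\projD_X(x,x_0)$ be continuous in the original topology for each $x_0$, so that each $\projD_X$-ball has open preimage. This is the crux of the argument: I would invoke the standard fact that if $\Phi\colon X\times X\times Y\to\R$ is jointly continuous and $Y$ is compact, then $(x_1,x_2)\mapsto\max_{y}\Phi(x_1,x_2,y)$ is continuous. Applying this to $\Phi(x_1,x_2,y)=D((x_1,y),(x_2,y))$ yields continuity of $\projD_X$ in the original topology, completing the argument; the statements for $\projD_Y$ follow by interchanging the roles of $X$ and $Y$. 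I expect this continuity-of-the-maximum step to be the only genuine obstacle, and compactness of $Y$ is what makes it work, both to attain the maximum and to guarantee its continuous dependence on the parameters.

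Finally, for the displayed inequality I would route through the intermediate point $(x_2,y_1)$ and apply the triangle inequality for $D$,
\[
D((x_1,y_1),(x_2,y_2)) \le D((x_1,y_1),(x_2,y_1)) + D((x_2,y_1),(x_2,y_2)).
\]
The first term is at most $\max_{y}D((x_1,y),(x_2,y))=\projD_X(x_1,x_2)$ and the second is at most $\max_{x}D((x,y_1),(x,y_2))=\projD_Y(y_1,y_2)$, and adding these bounds gives the claim.
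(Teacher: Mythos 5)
Your proof is correct, and for the metric axioms and the final inequality it coincides with the paper's argument (the same route through the intermediate point $(x_2,y_1)$). The genuine difference is in how you establish compatibility. The paper proves both inclusions of balls by hand: it gets one direction ($d_X$-balls inside $\projD_X$-balls) from continuity of $x\mapsto\projD_X(x_0,x)$ via the Berge maximum theorem --- essentially the same continuity-of-the-maximum fact you invoke --- and then establishes the reverse inclusion separately by passing to the hyperspace of compact subsets of $X\times Y$, bounding the Hausdorff distance $H_D(\{x_1\}\times Y,\{x_2\}\times Y)$ by $\projD_X(x_1,x_2)$, and citing the fact that compatible metrics on a compact space induce the same Hausdorff topology. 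You avoid that second half entirely: once the identity map from the compact original space to the Hausdorff space $(X,\projD_X)$ is known to be continuous, it is automatically a homeomorphism. Your route is more elementary (no hyperspace machinery) and gets the reverse inclusion for free from compactness of $X$; the paper's route exhibits both inclusions explicitly and quantitatively. One small point in your favor: the continuity-of-the-maximum step genuinely requires joint continuity of $(x,y)\mapsto D((x_0,y),(x,y))$ (which holds because $D$ is compatible with the product topology), and your formulation makes that hypothesis explicit, whereas the paper's phrasing records only the continuity of each $F_y$ for fixed $y$ before invoking Berge.
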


\begin{proof}

We first show that $\projD_X$ is a metric.
It is clear from the definition that $\projD_X(x_1,x_2) \ge 0$, with $\projD_X(x_1,x_2)=0$ if and only if $x_1=x_2$,
and that $\projD_X(x_1,x_2) = \projD_X(x_2,x_1)$.  To prove the triangle inequality, observe that 
\begin{align*}
\projD_X(x_1,x_3)& =  \max_{y\in Y}D((x_1,y),(x_3,y)) \\
& \le  \max_{y\in Y} \left( D((x_1,y),(x_2,y)) + D((x_2,y),(x_3,y))  \right) \\
& \le   \left(\max_{y\in Y} D((x_1,y),(x_2,y)) \right)   + \left(\max_{y\in Y} D((x_2,y),(x_3,y)) \right) \\
& = \projD_X(x_1,x_2) + \projD_X(x_2,x_3).
\end{align*}

Next we show that $\projD_X$ is compatible with the topology on $X$.  Let $d_X$ be a compatible metric on $X$ and $d_Y$ a compatible metric on $Y$.  Take any point $x_0\in X$.  Since $D$ is compatible with the product topology, the function $F_y(x):=D((x_0,y),(x,y))$ is continuous for any $y\in Y$.  Thus the function $x\mapsto \projD_X(x_0,x) = \max_{y\in Y} F_y(x)$ is continuous by the Berge maximum theorem \cite{Berge}.  So for any $\ep>0$, there exists a $\delta>0$ such that if $d_X(x_0,x') < \delta$, then $|\projD_X(x_0,x_0) - \projD_X(x_0,x')| = \projD_X(x_0,x') < \ep$.  Thus $B_{d_X}(x_0;\delta) \subset B_{\projD_X}(x_0;\ep)$; that is, any $\projD_X$-ball has a $d_X$-ball inside it.

To prove the opposite inclusion, recall that the Hausdorff metric  induced by $D$ on the set of nonempty compact subsets of $X\times Y$   is given by $H_{D}(A,B) = \max\{\max_{b\in B}\{D(b,A)\},\max_{a\in A}\{D(a,B)\}\}$.
Observe that $H_D(\{x_1\}\times Y,\{x_2\}\times Y) \le \projD_X(x_1,x_2)$.
Since $X\times Y$ is compact, the topology induced by 
the Hausdorff metric for $D$ is the same as that induced by the Hausdorff metric for $d_X\times d_Y$ \cite{MR0042109}.   
Thus the map from $\{\{x\}\times Y:x\in X\}$, with the metric induced by $H_D$, to $X$ given by the projection $\{x\}\times Y \mapsto x$ is continous.  So for any  $\ep>0$ and any $x_0\in X$, there exists a $\delta>0$ such that if $H_D(\{x_0\}\times Y,\{x\}\times Y) < \delta$, then $d_X(x_0,x)<\ep$.  Since $H_D(\{x_0\}\times Y,\{x\}\times Y)\le \projD_X(x_0,x)$, we have that $B_{\projD_X}(x_0;\delta) \subset B_{d_X}(x_0;\ep)$.  Thus $\projD_X$ and $d_X$ induce the same topology on $X$.

Finally, we have that 
 \begin{align*}
D((x_1,y_1),(x_2,y_2)) & \le D((x_1,y_1),(x_2,y_1)) + D((x_2,y_1),(x_2,y_2)) \\
& \le \projD_X(x_1,x_2) + \projD_Y(y_1,y_2).
\end{align*}

\end{proof}

\section{Recurrence for product maps}\label{sect:results}

Our main result concerns the relationships, for the various notions of recurrent set, between those of $f$ and $g$ and that of the product map $f\times g$.  The result for the chain recurrent set is easy, and the proof is included for completeness and to highlight the difference between $\ep$-chains and strong $\ep$-chains.  The result for the nonwandering set is a consequence of examples in the literature (see below), although I have not been able to find an explicit statement elsewhere.

\begin{thm}\label{thm:main}
Let $f:X\to X$ and $g:Y\to Y$ be continuous maps of compact metrizable spaces.  Let $d_X$ and $d_Y$ be compatible metrics on $X$ and $Y$,  respectively, and let $D$ be a metric on $X\times Y$ compatible with the product topology.  Then
\begin{enumerate}
	\item $\nw(f\times g) \subset \nw(f) \times\nw(g)$, and the inclusion can be strict. \label{item:nw}
	\item $\gr(f\times g) = \gr(f)\times\gr(g)$. \label{item:gr}
	\item $\mane(f\times g) \supset \mane(f)\times\mane(g)$, and the inclusion can be strict. \label{item:mane}
	\item $\scrwo_{\liftd}(f\times g) = \scrwo_{d_X}(f)\times\scrwo_{d_Y}(g)$. \label{item:scr-prod}
	\item $\scrwo_{D}(f\times g) \supset \scrwo_{\projD_X}(f)\times\scrwo_{\projD_Y}(g)$, and the inclusion can be strict. \label{item:scr-proj}
	\item $\crec(f\times g)=\crec(f)\times\crec(g)$. \label{item:crec}
\end{enumerate}
\end{thm}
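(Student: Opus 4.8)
The plan is to make part~\eqref{item:scr-prod} the technical heart and to deduce parts~\eqref{item:gr}, \eqref{item:mane}, and \eqref{item:scr-proj} from it, while treating parts~\eqref{item:nw} and \eqref{item:crec} separately. The common elementary observation is that in the metric $\liftd$ (and in $D$) every step of a chain for $f\times g$ splits into an $f$-step and a $g$-step, with $\liftd$-error equal to the sum of the two errors. Hence projecting any $\ep$-chain or strong $\ep$-chain for $f\times g$ onto each factor gives chains for $f$ and $g$ whose per-step error (resp.\ total error) is no larger; this yields the inclusions ``$\subset$'' in \eqref{item:scr-prod} and \eqref{item:crec}, and, since a single return time $n$ is shared, the inclusion \eqref{item:nw}. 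For the reverse inclusion in \eqref{item:crec} I would take a chain-recurrence cycle of length $p$ for $f$ and one of length $q$ for $g$ and run each around a common length $pq$; because ordinary $\ep$-chains constrain the \emph{per-step} error, repeating a cycle does not degrade it, so the coordinatewise combination is a $2\ep$-chain for $f\times g$. Isolating this point highlights exactly why strong chains are harder.

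The \emph{main obstacle} is the reverse inclusion of \eqref{item:scr-prod}. Given $x\in\scrwo_{d_X}(f)$ and $y\in\scrwo_{d_Y}(g)$, set $a_n$ to be the infimal total $d_X$-error over length-$n$ $f$-chains from $x$ to $x$, and $b_n$ the analogous quantity for $g$ at $y$. A strong $\liftd$-chain of length $n$ for $f\times g$ at $(x,y)$ has total error equal to the $d_X$-error of its $x$-part plus the $d_Y$-error of its $y$-part, and these two parts may be chosen independently, so the infimal error of a length-$n$ product cycle is $a_n+b_n$; thus $(x,y)\in\scrwo_{\liftd}(f\times g)$ iff $\inf_n(a_n+b_n)=0$. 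Both $a$ and $b$ are subadditive (concatenate cycles) with $\inf_n a_n=\inf_n b_n=0$ (strong chain recurrence). So everything reduces to the lemma: \emph{if $a_n,b_n\ge 0$ are subadditive with $\inf a=\inf b=0$, then $\inf_n(a_n+b_n)=0$}. Note the naive ``repeat each cycle to a common length'' argument \emph{fails} here, since for strong chains repeating a cycle multiplies the total error, and the lengths forcing $a_n$ and $b_n$ small need not coincide; this is precisely the crux. I would prove the lemma by a pigeonhole argument: for any $q$, taking partial sums of cheap indices modulo $q$ and using a collision shows $\{n:a_n<\ep\}$ meets $q\Z$, and a simultaneous version produces a single index $N$ with $a_N$ and $b_N$ both small. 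Essentially all the difficulty lives in this combinatorial statement.

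Granting \eqref{item:scr-prod}, the remaining equalities follow formally. For \eqref{item:scr-proj}, the inequality $D\le \projD_X+\projD_Y$ from the lemma of Section~\ref{sect:defns} shows that a strong $\ep$-chain measured in the larger metric $\projD_X+\projD_Y$ (the lift of $\projD_X$ and $\projD_Y$) is also a strong $\ep$-chain in $D$; combined with \eqref{item:scr-prod} applied to $\projD_X,\projD_Y$ this gives $\scrwo_{\projD_X}(f)\times\scrwo_{\projD_Y}(g)\subset\scrwo_D(f\times g)$. For \eqref{item:mane}, since $\mane=\bigcup_{d}\scrwo_{d}$, part~\eqref{item:scr-prod} gives $\scrwo_{d_X}(f)\times\scrwo_{d_Y}(g)=\scrwo_{\liftd}(f\times g)\subset\mane(f\times g)$ for every $d_X,d_Y$; taking the union over all $d_X,d_Y$ yields $\mane(f)\times\mane(g)\subset\mane(f\times g)$. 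For \eqref{item:gr}, since $\gr=\bigcap_{d}\scrwo_{d}$: the inclusion $\supset$ comes from \eqref{item:scr-proj} applied to every compatible $D$ and then intersecting over $D$ (using $x\in\gr(f)\subset\scrwo_{\projD_X}(f)$), while $\subset$ comes from \eqref{item:scr-prod} applied to every lift $\liftd$ and then intersecting over $d_X,d_Y$.

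Finally I would complete \eqref{item:nw} and justify the strict inclusions. For \eqref{item:nw} the projection argument already gives $\nw(f\times g)\subset\nw(f)\times\nw(g)$; strictness, as well as that in \eqref{item:mane} and \eqref{item:scr-proj}, reflects genuine failures of synchronization, and I would exhibit (or adapt from the examples referenced in the introduction) maps whose individual recurrence occurs only at incommensurable return times, so that $(x,y)$ wanders although $x$ and $y$ do not, and correspondingly metric-dependent examples separating $\scrwo_D$ from $\scrwo_{\projD_X}\times\scrwo_{\projD_Y}$.
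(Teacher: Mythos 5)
Your reduction of part~\eqref{item:scr-prod} to the statement about the sequences $a_n$ and $b_n$ is valid (the infimal error of a length-$n$ product cycle is indeed $a_n+b_n$, and both sequences are subadditive with infimum $0$), but the combinatorial lemma you reduce everything to is \emph{false}, so the crux of your argument collapses. Counterexample: let $a$ be the subadditive closure of the cost function assigning cost $2^{-i}$ to a rapidly growing sequence of ``generators'' $g_i$ and cost $1$ to every other integer, and similarly for $b$ with generators $h_j$. For $\ep<1$ the set $\{n:a_n<\ep\}$ consists exactly of the sums $\sum m_ig_i$ with $\sum m_i2^{-i}<\ep$; if the $g_i$ grow fast enough, every such sum lies in $\bigcup_I[g_I,(2^{I-1}+1)g_I)$. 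Interleaving the $h_j$ into the gaps between these intervals makes $\{n:a_n<1/2\}$ and $\{n:b_n<1/2\}$ disjoint, so $a_n+b_n\ge 1/2$ for all $n$ while $\inf a=\inf b=0$. Your pigeonhole observation --- that $\{n:a_n<\ep\}$ meets every arithmetic progression $q\Z$ --- is correct but insufficient: both sets in the counterexample meet every $q\Z$ and are still disjoint. (More structurally, each set $\{n:a_n<\ep\}$ contains an IP-set, but two IP-sets need not intersect.) So no argument that sees only the two error sequences can prove part~\eqref{item:scr-prod}; some genuinely dynamical input is required.

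The paper supplies that input as follows. First one proves a uniformity lemma: for $x\in\scrwo_{d_X}(f)$ and $\ep>0$ there is a single length $n(x,\ep)$ such that for \emph{every} $x'$ with $x\sim_f x'$ there is a strong $\ep$-chain of length $n(x,\ep)$ from $x$ to itself passing through $x'$. The set $N(x,\ep)$ of such universal lengths then contains an IP-set, and the theorem that IP-sets are sets of topological recurrence (Theorem~12 of \cite{FM}), applied to $g$ restricted to the chain class of $y$, produces a point $y'$ and a time $m_y\in N(x,\ep/4)$ with $d_Y(y',g^{m_y}(y'))\le\ep/4$; symmetrically one gets $x'$ and $m_x\in N(y,\ep/4)$. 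Splicing the genuine orbit loop of $g$ at $y'$ of length $m_y$ into a chain for $g$ of length $m_x$ through $y'$, and the orbit loop of $f$ at $x'$ of length $m_x$ into a chain for $f$ of length $m_y$ through $x'$, yields strong $\ep/2$-chains of the common length $m_x+m_y$. This freedom to loop around a well-chosen \emph{point} of the chain class, rather than matching lengths of two fixed families of cycles, is exactly the synchronization device your proposal lacks. Separately, your treatment of the strict inclusions in \eqref{item:nw}, \eqref{item:mane}, and \eqref{item:scr-proj} is only a sketch; in particular, strictness in \eqref{item:mane} requires the characterization $\mane(f)=\Fix(f)\cup\crec_{X\backslash\Int(\Fix(f))}(f)$ (to compute $\mane$ of a circle map with a semicircle of fixed points), and strictness in \eqref{item:scr-proj} is then deduced from it by contradiction --- neither follows from a generic ``incommensurable return times'' example.
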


\begin{proof}[Proof of Theorem~\ref{thm:main}(\ref{item:nw})]
It is easy to see that $\nw(f\times g) \subset \nw(f) \times\nw(g)$.  Let $(x,y)$ be a nonwandering point for $f\times g$, and let $U$ be any neighborhood of $x$ in $X$ and $V$ any neighborhood of $y$ in $Y$.  Then $U\times V$ is a neighborhood of $(x,y)$ in $X\times Y$, and so $(f\times g)^n(U\times V) \cap U\times V \ne \emptyset$ for some $n>0$; thus $f^n(U)\cap U \ne \emptyset$ and $g^n(V)\cap V \ne \emptyset$.  So $x$ is in $\nw(f)$ and $y$ is in $\nw(g)$.

Next we show that the inclusion can be strict.  Sawada \cite{Saw} and Coven and Nitecki \cite{CN} give  examples of  continuous maps $f$ of compact metric spaces such that $\nw(f^2)$ is strictly contained in $\nw(f)$.  Let $f$ be such a map, and let $x$ be a point in $\nw(f)\backslash\nw(f^2)$.  Since $x\notin \nw(f^2)$, there exists a neighborhood $U$ of $x$ such that $f^{2m}(U) \cap U = \emptyset$ for all $m>0$.  Define $g:\{0,1\}\to\{0,1\}$ by $g(0)=1$, $g(1)=0$.  Then $U \times \{0\}$ is a neighborhood of $(x,0)$, but $(f\times g)^n(U \times \{0\}) \cap U \times \{0\} = \emptyset$ for all $n\ge1$, since $g^n(0)=1$ if $n$ is odd and $f^n(U)\cap U=\emptyset$ if $n$ is even.  Thus $(x,0)$ is not in $\nw(f\times g)$, even though $x\in\nw(f)$ and clearly $0\in\nw(g)$.  

(Note that for any $k>0$, Lemma~\ref{lem:nwnotprod} gives examples such that $\nw(f^k)$ is strictly contained in $\nw(f)$.
There are also  functions constructed in \cite{GIM} that can be adapted to give an example for which $\nw(f\times g) \subsetneq \nw(f)\times\nw(g)$.)

\end{proof}

\begin{proof}[Proof of Theorem~\ref{thm:main}(\ref{item:crec})]

Recall that chain recurrence is independent of the choice of metric.  If $((x_0,y_0),(x_1,y_1),\ldots,(x_m,y_m))$ is an $(\ep,f\times g,\liftd)$-chain, then $(x_0,\ldots,x_m)$ is an $(\ep,f,d_X)$-chain and $(y_0,\ldots,y_m)$ is an $(\ep,g,d_Y)$-chain; thus $\crec(f\times g)\subset \crec(f)\times\crec(g)$.  
Now let $(x_0,\ldots,x_m)$ be an $(\ep/2,f,d_X)$-chain and $(y_0,\ldots,y_n)$ an $(\ep/2,g,d_Y)$-chain.  If $m \ne n$, we can concatenate the first chain with itself $n$ times and the second with itself $m$ times to get $\ep/2$-chains of equal lengths, so we may assume that $m=n$.  Then  $((x_0,y_0),(x_1,y_1),\ldots,(x_m,y_m))$ is an $(\ep,f\times g,\liftd)$-chain.
\end{proof}

The key observation in the preceding proof is that the concatenation of an $\ep$-chain from $p$ to $q$ with an $\ep$-chain from $q$ to $r$ gives an $\ep$-chain from $p$ to $r$, and, more generally, the concatenation of an arbitrary number of such $\ep$-chains gives an $\ep$-chain.  This is not true for strong $\ep$-chains:  Concatenating $N$ strong $\ep$-chains gives a $N\ep$-chain, so in order to control the sum of the errors in the concatenated chain, we must know in advance the number of chains to be concatenated.  That is why we need Lemma~\ref{lem:visits} below.

\begin{proof}[Proof of Theorem~\ref{thm:main}(\ref{item:scr-prod})]
It is easy to see that $\scrwo_{\liftd}(f\times g) \subset \scrwo_{d_X}(f)\times\scrwo_{d_Y}(g)$.  Take any $(x,y)\in\scrwo_{\liftd}(f\times g)$ and any $\ep>0$, and let $((x_0,y_0)=(x,y), (x_1,y_1),\ldots,(x_n,y_n)=(x,y))$ be a strong $(\ep,f\times g,\liftd)$-chain from $(x,y)$ to itself.  
Then $\sum_{i=1}^n d_X(f(x_{i-1}), x_i) \le \sum_{i=1}^n d_X(f(x_{i-1}), x_i) + d_Y(g(y_{i-1}),y_i) = \sum_{i=1}^n \liftd((f(x_{i-1}),g(y_{i-1})),(x_{i},y_i)) \le\ep$, so $(x_0=x, x_1,\ldots,x_n=x)$ is a strong $(\ep,f,d_X)$-chain from $x$ to itself.  Since $\ep$ was arbitrary, we have $x\in\scrwo_{d_X}(f)$.  Similarly, we have $y\in\scrwo_{d_Y}(g)$, and so $(x,y)\in\scrwo_{d_X}(f)\times\scrwo_{d_Y}(g)$.

Next we show that $\scrwo_{d_X}(f)\times\scrwo_{d_Y}(g) \subset \scrwo_{\liftd}(f\times g)$.  We must show that for $x\in\scrdf$, $y\in\scrwo_{d_Y}(g)$, and $\ep>0$, there are strong $\ep$-chains from $x$ to $x$ and $y$ to $y$ of the same length.  We use the following lemmas.

\begin{lemma}\label{lem:visits}
For any $x\in\scrdf$ and any $\ep>0$, there exists an $n=n(x,\ep)>0$ such that for any $x'$ with $x\sim_f x'$, there is a strong $\ep$-chain $(x_0=x,x_1,\ldots,x_{n(x,\ep)}=x)$ of length $n(x,\ep)$ from $x$ to itself passing through $x'$, that is, such that $x_i=x'$ for some $i$.
\end{lemma}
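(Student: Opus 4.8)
The plan is to build a single strong $(\ep,f,d_X)$-chain loop at $x$ of one fixed length $n=n(x,\ep)$ that comes close to every point of the chain class of $x$, and then to obtain the required chain through a given $x'$ by \emph{substituting} $x'$ for a nearby point of this loop, which does not change its length. Write $[x]=\{x'\in X: x\sim_f x'\}$; since $\sim_f$ is a closed relation and $X$ is compact, $[x]$ is compact. First I would fix a small $\eta>0$ (to be pinned down using the uniform continuity of $f$) and choose a finite $\eta$-net $z_1,\dots,z_m\in[x]$, which exists by compactness.

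Next, for each $z_j$ I would use $x\sim_f z_j$ to produce, for a small $\delta>0$ to be chosen later, a strong $(\delta,f,d_X)$-chain from $x$ to $z_j$ and one from $z_j$ to $x$; concatenating them gives a strong $(2\delta)$-chain loop at $x$ passing through $z_j$. Concatenating these $m$ loops in turn yields a single loop $\Lambda$ from $x$ to itself that passes through every $z_j$, whose length $n$ depends only on the finitely many chosen chains (hence only on $x$ and $\ep$) and whose total error is at most $2m\delta$. The crucial point is that the number $m$ of concatenated pieces is fixed \emph{before} $\delta$ is chosen, so that—despite the fact that concatenating strong chains inflates the error sum, as noted in the paragraph preceding the lemma—I can take $\delta$ small enough that $2m\delta\le\ep/2$.

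Finally, given any $x'$ with $x\sim_f x'$, I choose $z_j$ with $d_X(x',z_j)\le\eta$. Since $\Lambda$ passes through $z_j$, say $\dots\to a\to z_j\to b\to\dots$, I replace that occurrence of $z_j$ by $x'$, leaving the length equal to $n$; only the two adjacent steps are affected, and by the triangle inequality together with the uniform continuity of $f$ the total error increases by at most $d_X(f(a),x')-d_X(f(a),z_j) + d_X(f(x'),b)-d_X(f(z_j),b) \le \eta+\omega_f(\eta)$, where $\omega_f$ is a modulus of continuity for $f$. Choosing $\eta$ (and hence $m$) first so that $\eta+\omega_f(\eta)\le\ep/2$, and only then choosing $\delta$ so that $2m\delta\le\ep/2$, produces a strong $(\ep,f,d_X)$-chain of length exactly $n$ from $x$ to itself passing through $x'$, as required. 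The main obstacle is precisely this length synchronization: a separate round-trip through each $x'$ has its own length, and because concatenation inflates the error one cannot freely pad lengths afterward. The substitution step resolves this by keeping the length rigid—$x'$ replaces a point rather than being inserted—while the construction of $\Lambda$ from a \emph{fixed} finite number of pieces keeps the accumulated error under control.
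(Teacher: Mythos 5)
Your proof is correct and follows essentially the same strategy as the paper's: build one strong chain loop at $x$ of a fixed length that passes through a finite net of the chain class, then substitute $x'$ for a nearby net point so that the length is unchanged and the extra error is controlled by the triangle inequality and the uniform continuity of $f$. The only difference is bookkeeping --- the paper uses a countable dense subset with geometrically decaying error budgets $\ep/(3\cdot 2^i)$ and extracts a finite subcover, whereas you fix a uniform $\eta$-net and a single $\delta$ up front --- and both versions correctly fix the number of concatenated pieces \emph{before} shrinking the per-chain error, which is the crux of the lemma.
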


\begin{proof}[Proof of Lemma~\ref{lem:visits}]
Restrict $f$ to the space $X_x=\{x':x\sim_f x'\}$.  Since $X_x$ is compact metric, it is separable; let $\{s_1,s_2,\ldots\}$ be a countable dense subset of $X_x$.  For each $i$, choose $\delta_i\le\frac\ep{3\cdot2^{i}}$ such that if $d_X(z,w)\le\delta_i$, then $d_X(f(z),f(w))\le \frac\ep{3\cdot2^{i}}$.
Since $x\sim_f s_i$, there is a strong $\delta_i/2$-chain from $x$ to $s_i$, and one from $s_i$ to $x$.  Concatenate these chains to get a strong $\delta_i$-chain from $x$ to itself passing through $s_i$; call this chain $L_i$.  If $d_X(x',s_i)\le\delta_i$, then we can substitute $x'$ for $s_i$ in the chain to get a strong $\frac\ep{2^i}$-chain from $x$ to itself passing through $x'$ of the same length; call this chain $L_i(x')$.

Pick a finite subcover of the open cover $\{B_{\delta_i}(s_i)\}_{i=1}^\infty$ of $X_x$; by renumbering if necessary, we can assume that the subcover is $\{B_{\delta_i}(s_i)\}_{i=1}^N$.  Then $x'$ is in $B_{\delta_i}(s_i)$ for some $i$.  Concatenate the chains $L_1, L_2, \ldots, L_{i-1}, L_i(x'),L_{i+1},\ldots, L_N$ to get a chain from $x$ to itself passing through $x'$.  Since each $L_j$ is a strong $\frac\ep{2^j}$-chain, and $\sum_{j=1}^N\frac\ep{2^j}<\ep$, the concatenation is a strong $\ep$-chain.  The length of the chain is the sum of the lengths of the chains $L_j$, which is independent of $x'$, so define $n(x,\ep)=\sum_{j=1}^N \text{length}(L_j)$.

\end{proof}

For any $x\in\scrdf$ and $\ep>0$, define the set $N(x,\ep)$ to be $\{n:$ for every $x'$ with $x\sim_f x'$, there is a strong $\ep$-chain of length $n$ from $x$ to itself passing through $x'\}$.  A set of natural numbers is an \emph{IP-set} \cite{FM} if it consists of all finite sums of some infinite set.

\begin{lemma}\label{lem:IP}
For any $x\in\scrdf$ and $\ep>0$,  the set $N(x,\ep)$ contains an IP-set.
\end{lemma}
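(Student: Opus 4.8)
The plan is to build the IP-set from the lengths furnished by Lemma~\ref{lem:visits} applied at a geometrically shrinking sequence of scales, assigning the error budget $\ep/2^k$ to scale $k$ so that the total error in any concatenation stays below $\ep$. Concretely, for each $k\ge 1$ set $n_k=n(x,\ep/2^k)$, so that $n_k\ge 1$ and, for every $x'$ with $x\sim_f x'$, there is a strong $(\ep/2^k)$-chain of length $n_k$ from $x$ to itself passing through $x'$.

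The key sub-claim I would establish is that for every finite nonempty $F\subset\nn$ one has $\sum_{k\in F}n_k\in N(x,\ep)$. Fix $x'$ with $x\sim_f x'$. For each $k\in F$, Lemma~\ref{lem:visits} at scale $\ep/2^k$ supplies a strong $(\ep/2^k)$-chain of length $n_k$ from $x$ to itself passing through $x'$. Each such chain is a loop at $x$, so concatenating them over $k\in F$ yields a chain from $x$ to itself of length $\sum_{k\in F}n_k$ that still passes through $x'$, and whose total error is at most $\sum_{k\in F}\ep/2^k<\ep$. Since $x'$ was an arbitrary point with $x\sim_f x'$, this shows $\sum_{k\in F}n_k\in N(x,\ep)$; note that using the through-$x'$ chain at \emph{every} index avoids having to assume $x\sim_f x$ separately.

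The main obstacle is that the values $n_k$ need not be distinct, so $\{n_k:k\ge1\}$ need not be an infinite set, whereas the definition of an IP-set requires an infinite generating set. To get around this I would group the indices into disjoint finite blocks. Partition $\nn$ into consecutive finite blocks $B_1,B_2,\ldots$ and set $a_i=\sum_{k\in B_i}n_k$; since each $n_k\ge 1$ we have $a_i\ge|B_i|$, so choosing the blocks with $|B_i|$ growing fast enough (for instance $|B_i|>a_{i-1}$) forces the $a_i$ to be strictly increasing, making $A=\{a_i:i\ge1\}$ a genuine infinite set. Any finite sum of elements of $A$ has the form $\sum_{i\in S}a_i=\sum_{k\in\bigcup_{i\in S}B_i}n_k$, a sum of the $n_k$ over a finite index set, and hence lies in $N(x,\ep)$ by the sub-claim. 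Thus the IP-set consisting of all finite sums from the infinite set $A$ is contained in $N(x,\ep)$, which proves the lemma.
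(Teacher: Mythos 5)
Your proof is correct. Its first half coincides with the paper's: both arguments observe that concatenating the length-$n(x,\ep/2^k)$ loops for distinct indices $k$ keeps the total error below $\sum_k\ep/2^k<\ep$ and preserves passage through an arbitrary $x'$ with $x\sim_f x'$, so every finite sum $\sum_{k\in F}n(x,\ep/2^k)$ lies in $N(x,\ep)$. Where you diverge is in guaranteeing that the generating set is infinite. The paper handles this by a case split on $X_x$: if $X_x$ is not a single periodic orbit it asserts that $n(x,\ep/2^i)\to\infty$ (which itself needs a small compactness argument --- bounded lengths with error tolerances tending to $0$ would force every point of $X_x$ onto a finite orbit of $x$), and if $X_x$ is a periodic orbit of period $p$ it takes $n(x,\ep/2^i)=ip$. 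You instead accept that the values $n_k$ may repeat and repair this by summing over disjoint finite blocks $B_i$ of indices with $|B_i|$ chosen inductively so large that the block sums $a_i\ge|B_i|>a_{i-1}$ are strictly increasing; a finite sum of the $a_i$ is still a sum of the $n_k$ over a finite set of distinct indices, hence still lies in $N(x,\ep)$ by your sub-claim. The blocking trick is slightly longer, but it avoids both the case analysis and the unjustified assertion that $n(x,\ep/2^i)\to\infty$ in the non-periodic case, so it is arguably the more self-contained way to finish.
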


\begin{proof}[Proof of Lemma~\ref{lem:IP}]
Observe that $N(x,\ep)$ contains all finite sums of the set $\{n(x,\frac{\ep}{2^i})\}_{i=1}^\infty$, with $n(x,\frac{\ep}{2^i})$ as in Lemma~\ref{lem:visits}, since a concatenation of strong $\frac\ep{2^i}$-chains (with distinct $i$'s) is a strong $\ep$-chain.
If $X_x$ does not consist of a single periodic orbit, then necessarily $n(x,\frac{\ep}{2^i})\to\infty$ as $i\to\infty$, so the set $\{n(x,\frac{\ep}{2^i})\}_{i=1}^\infty$ is infinite.  If  $X_x$ is a single periodic orbit, $\{x, f(x),\dots,f^p(x)=x\}$, then we can choose $n(x,\frac{\ep}{2^i}) = ip$, and again the set $\{n(x,\frac{\ep}{2^i})\}_{i=1}^\infty$ is infinite.
\end{proof}

Take any point $(x,y)$ in $\scrdf\times\scrdg$ and any $\ep>0$, and note that Lemmas~\ref{lem:visits} and \ref{lem:IP} apply to $y$ and $g$ as well.  Consider $g$ restricted to the set $Y_y=\{y': y\sim_g y'\}$.  Since $N(x,\ep/4)$ contains an IP-set, Theorem~12 of \cite{FM} guarantees that there exist a $y'\in Y_y$ and an $m_{y} \in N(x,\ep/4)$ such that $d_Y(y', g^{m_{y}}(y'))\le\ep/4$ (because $N(x,\ep/4)$ is a set of topological recurrence).  Thus $(y', g(y'),g^2(y'),\ldots, g^{m_{y}-1}(y'),y')$ is a strong $(\ep/4,g)$-chain of length $m_y$ from $y'$ to itself.  Similarly, there exist an $x'\in X_x$ and an $m_x\in N(y,\ep/4)$ such that $(x', f(x'),\ldots,f^{m_x-1}(x'),x')$ is a strong $(\ep/4,f)$-chain.

Since $m_y\in N(x,\ep/4)$, there is a strong $(\ep/4, f)$-chain $(x_0=x, x_1,\ldots,x_i=x',\ldots,x_{m_y}=x)$ of length $m_y$ from $x$ to itself passing through $x'$.  Similarly, there is a strong $(\ep/4, g)$-chain $(y_0=x, y_1,\ldots,y_j=y',\ldots,y_{m_x}=y)$ of length $m_x$ from $y$ to itself passing through $y'$.  Then $$L_x=(x_0=x, x_1,\ldots, x_i=x', f(x'),\ldots,f^{m_x-1}(x'),x', x_{i+1},\ldots,x_{m_y}=x)$$ is a strong $(\ep/2,f)$-chain of length $m_x+m_y$ from $x$ to itself, and similarly $L_y=(y_0=y, y_1,\ldots, y_j=y', g(y'),\ldots,g^{m_y-1}(y'),y', y_{j+1},\ldots,y_{m_x}=y)$ is a strong $(\ep/2,g)$-chain of length $m_x+m_y$ from $y$ to itself.  Thus the product of the chains $L_x$ and $L_y$ (that is, the chain $((x_0,y_0)=(x,y), (x_1,y_1),\ldots,(x,y))$)
is a strong $(\ep,f\times g, \liftd)$-chain of length $m_x+m_y$ from $(x,y)$ to itself.  Since $\ep$ was arbitrary, we have $(x,y)\in\scrwo_{\liftd}(f\times g)$.

\end{proof}

\begin{proof}[Proof of Theorem~\ref{thm:main}(\ref{item:scr-proj})]

Let $(x,y)$ be a point in $\scrwo_{\projD_X}(f)\times\scrwo_{\projD_Y}(g)$.  As in the proof of Theorem~\ref{thm:main}(\ref{item:scr-prod}), for any $\ep>0$ we can construct a strong $(\ep/2,f,\projD_X)$-chain $(x_0=x,x_1,\ldots,x_n=x)$ and a strong $(\ep/2,f,\projD_Y)$-chain $(y_0=y,y_1,\ldots,y_n=y)$ of the same length.  Since for all $i$, $1\le i\le n$, we have $D((f(x_{i-1}),g(y_{i-1})), (x_i,y_i)) \le \projD_X(f(x_{i-1}),x_i)+ \projD_Y(g(y_{i-1}),y_i)$, the product chain $((x_0,y_0)=(x,y),\ldots,(x_n,y_n)=(x,y))$ is a strong $(\ep,f\times g, D)$-chain.  Since $\ep$ was arbitrary, we have $(x,y)\in \scrwo_{D}(f\times g)$.

We prove that the inclusion can be strict by contradiction.  Assume that $\scrwo_{D}(f\times g) = \scrwo_{\projD_X}(f)\times\scrwo_{\projD_Y}(g)$ for every metric $D$ on $X\times Y$ compatible with the product topology, and let $(x,y)$ be a point in $\mane(f\times g)$. Thus there is a metric $D$ for which $(x,y)\in \scrwo_{D}(f\times g)$, which implies that $(x,y) \in \scrwo_{\projD_X}(f)\times\scrwo_{\projD_Y}(g)$, which implies that $(x,y)\in\mane(f)\times\mane(g)$. But in Example~\ref{ex:strictm} below, we construct $f$ and $g$ such that $ \mane(f)\times\mane(g) \subsetneq \mane(f\times g)$, which means that the inclusion must be strict for some metric on $X\times Y$.  (In fact, the metric derived from the Minkowski ?-function discussed in the example will work.)

\end{proof}

\begin{proof}[Proof of Theorem~\ref{thm:main}(\ref{item:gr})]

This follows from Theorem~\ref{thm:main}(\ref{item:scr-prod}) and (\ref{item:scr-proj}).  We have 
\begin{align*}
\gr(f)\times\gr(g) &\subset \cap_D \scrwo_{\projD_X}(f)\times\scrwo_{\projD_Y}(g)  \\
& \subset  \cap_D \scrwo_{D}(f\times g) \\
& = \gr(f\times g),
%
\end{align*}
 where the intersection is over all metrics $D$ on $X\times Y$ compatible with the product topology, and 
 \begin{align*}
 \gr(f\times g) & \subset \cap_{d_X,d_Y} \scrwo_{\liftd}(f\times g)\\
  & = \cap_{d_X,d_Y} \scrwo_{d_X}(f)\times\scrwo_{d_Y}(g)\\
  & = \gr(f)\times\gr(g),
\end{align*}
 where the intersection is over all metrics $d_X$ and $d_Y$ compatible with the topologies on $X$ and $Y$, respectively.

\end{proof}

To prove that the inclusion in Theorem~\ref{thm:main}(\ref{item:mane}) can be strict, we need an alternative description of the Ma\~n\'e set $\mane(f)$, in terms of ordinary chain recurrence.

\begin{defn}
We say that $x$ is \emph{chain-recurrent through $A$} if for every $\ep>0$, there is an $\ep$-chain from $x$ to itself lying entirely in $A$.  We denote by $\crec_A(f)$ the set of points chain-recurrent through $A$.
\end{defn}

  Fathi and Pageault originally proved the following result for homeomorphisms (\cite{FP}*{Theorem~3.5}).  Here, we extend the result to continuous maps and give a somewhat more topological, less technical proof.

\begin{thm}\label{thm:mfpt}

$\mane(f)= \Fix(f) \cup \crec_{X\backslash\Int(\Fix(f))}(f)$.
\end{thm}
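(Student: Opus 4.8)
The plan is to prove both inclusions using the neighborhood-of-the-diagonal characterization of Theorem~\ref{thm:altdefmane}, which lets me avoid constructing explicit metrics. Throughout, write $A = X\bs\Int(\Fix(f))$, a compact set, and recall that since $\diag$ is compact, every closed neighborhood $\prodset D$ of $\diag$ contains some $\tube(\delta)$. The separate term $\Fix(f)$ is genuinely needed: a point of $\Int(\Fix(f))$ lies in $\mane(f)$ as a fixed point but cannot lie in $\crec_A(f)$, since it is not even in $A$ (one checks that every other fixed point lies in both sets). The containment $\Fix(f)\subset\mane(f)$ is immediate from Theorem~\ref{thm:altdefmane}: if $f(x)=x$, then for any closed $\prodset D$ choose a symmetric closed $\prodset N$ with $\prodset N^3\subset\prodset D$, and the length-one chain $(x,x)$ is an $\prodset N$-chain from $x$ to itself, so the criterion holds with $n=1$.

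For $\mane(f)\bs\Fix(f)\subset\crec_A(f)$, fix $x\in\mane(f)$ with $f(x)\ne x$ and fix $\ep>0$. Applying Theorem~\ref{thm:altdefmane} with $\prodset D=\tube(\ep)$ yields a symmetric closed $\prodset N\subset\tube(\ep)$, an $n>0$ with $\prodset N^{3^n}\subset\tube(\ep)$, and an $\prodset N$-chain $x=x_0,\dots,x_n=x$, which (since $\prodset N\subset\tube(\ep)$) is already an $\ep$-chain. The key observation is that the exponent $3^n$ controls drift inside $\Int(\Fix(f))$: if $x_a,\dots,x_b$ is a maximal run of points of $\Int(\Fix(f))$, so that $1\le a\le b\le n-1$ and $x_{a-1},x_{b+1}\in A$, then $f$ is the identity on the run, whence $(x_i,x_{i+1})\in\prodset N$ for $a\le i<b$ and so $(x_a,x_b)\in\prodset N^{b-a}\subset\prodset N^{3^n}\subset\tube(\ep)$, giving $d_X(x_a,x_b)\le\ep$. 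Combining this with $(f(x_{a-1}),x_a)\in\prodset N\subset\tube(\ep)$ and $(x_b,x_{b+1})=(f(x_b),x_{b+1})\in\prodset N\subset\tube(\ep)$, the triangle inequality gives $d_X(f(x_{a-1}),x_{b+1})\le 3\ep$. Deleting each such run and joining $x_{a-1}$ directly to $x_{b+1}$ therefore produces a $3\ep$-chain from $x$ to itself lying entirely in $A$; since $\ep$ was arbitrary, $x\in\crec_A(f)$.

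The reverse inclusion $\crec_A(f)\subset\mane(f)$ is where the real work lies, and I expect it to be the main obstacle. Given $x\in\crec_A(f)$ and a closed neighborhood $\prodset D$ of $\diag$, I must produce a symmetric closed $\prodset N$ and an $n$ with $\prodset N^{3^n}\subset\prodset D$ together with an $\prodset N$-chain of length $n$ from $x$ to itself. The difficulty is that the chain length $n$ is tied to the exponent $3^n$, so a long $A$-chain forces $\prodset N$ to be small, and a uniform tube will not work since $\tube(\eta)^{3^n}$ is comparable to $\tube(3^n\eta)$, which is too large. The plan is instead to build $\prodset N$ adapted to the dynamics: starting from an $\ep$-chain $x=z_0,\dots,z_m=x$ in $A$, take $\prodset N$ to be the union of a tiny symmetric tube $\tube(\eta)$ with small symmetric balls around the finitely many ``jump'' pairs $(f(z_{i-1}),z_i)$, so that the given chain is automatically an $\prodset N$-chain, and then estimate $\prodset N^{3^m}$. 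What makes this feasible is that the jump pairs cannot be composed with one another except near $\Fix(f)$: continuing past $z_i$ would require $z_i$ to be $\prodset N$-close to some $f(z_{j-1})$, and on $A$ away from fixed points the displacement $d_X(f(z),z)$ is bounded below, so genuine movement blocks runaway composition. Because the chain avoids $\Int(\Fix(f))$, the only uncontrollable source of drift — traversal of the interior fixed set, where $f=\id$ and chains are rigid — has been excluded, and the residual composition near the thin boundary set $\Fix(f)\cap A$ must be tamed by taking $\eta$ small relative to $\prodset D$ and bounding the net displacement of the chain through neighborhoods of $\partial\Fix(f)$. Carrying out this composition estimate to conclude $\prodset N^{3^m}\subset\prodset D$ is the crux of the argument.
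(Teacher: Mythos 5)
Your first inclusion, $\mane(f)\subset\Fix(f)\cup\crec_{X\bs\Int(\Fix(f))}(f)$, is correct and takes a genuinely different route from the paper: the paper proves the contrapositive by showing that a point outside both sets fails to be strong chain recurrent for \emph{every} compatible metric, whereas you run Theorem~\ref{thm:altdefmane} forward and use the $3^n$-fold composition bound to collapse each maximal run through $\Int(\Fix(f))$ at a cost of $3\ep$. That step checks out (since $x\notin\Fix(f)$, the endpoints $x_0=x_n=x$ lie in $X\bs\Int(\Fix(f))$, so every such run is interior to the chain), and it is arguably cleaner than the paper's argument.

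The gap is in the reverse inclusion $\crec_{X\bs\Int(\Fix(f))}(f)\subset\mane(f)$, which you correctly identify as the crux and then do not prove. Your proposed $\prodset N$ (a thin tube union small product balls around the jump pairs) is exactly the shape of the paper's construction, but the mechanism you offer for bounding $\prodset N^{3^m}$ --- that on $A$ away from fixed points the displacement $d_X(f(z),z)$ is bounded below, so ``genuine movement blocks runaway composition'' --- is not what makes the estimate work and does not address the actual obstruction. The danger is coincidence of chain points: if, say, $z_i=z_j$ or $z_i=f(z_{j-1})$ for $i\ne j$, the balls overlap and the jump pairs compose, producing drift on the order of $d(f(z_{i-1}),f(z_{j-1}))$, which a lower bound on pointwise displacement does nothing to prevent; and near $\partial\Fix(f)\subset A$ the displacement is not bounded below anyway. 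What is actually needed, and what the paper supplies, is: (i) first perturb the chain off $\Fix(f)$ entirely and then shorten it combinatorially until the points $f(x_{i-1}),x_i$ admit pairwise disjoint neighborhoods $C_i$ (the case $x_i=f(x_{j-1})$ requires a separate truncation using uniform continuity); and (ii) set the tube width to $\ep_0/3^n$ where $\ep_0<\min_{i\ne j}d(C_i,C_j)$, so that a $3^n$-step $\prodset N$-chain cannot travel between distinct $C_i$'s by tube steps and therefore uses at most one jump ball, giving total drift at most $\ep_0+\max_i\diam(C_i)\le 4\ep$. Without (i) the jumps compose, and without the $1/3^n$ scaling in (ii) the tube steps alone accumulate too much drift; neither is recoverable from the heuristic you give, so the hard direction remains unproved.
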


\begin{proof}

We first prove that $\mane(f) \subset \Fix(f) \cup \crec_{X\backslash\Int(\Fix(f))}(f)$.  Take any point $x$ not in $\Fix(f) \cup \crec_{X\backslash\Int(\Fix(f))}(f)$ and any metric $d'$ compatible with the topology on $X$.  We will show that $x\not\in\scrwo_{d'}(f)$; since $d'$ was arbitrary, this implies that $x\not\in\mane(f)$.

First, we show that there exist an $\ep_0>0$ and an $\alpha>0$ such that for any $\ep\le\ep_0$, any (ordinary, not strong) ($\ep,d')$-chain from $x$ to itself must pass through the  $\alpha$-interior of $\Fix(f)$.  (The $\alpha$-interior of $\Fix(f)$ is $\{z: B_{d'}(z;\alpha)\subset\Fix(f)\}$,  which we will denote by $\Int_\alpha(\Fix(f))$.)  Assume not.  Then for any $\alpha>0$, there exists a sequence $\{\ep_j\}$ tending to 0 such that for each $j$ there is an $\ep_j$-chain from $x$ to itself which stays entirely within $\alpha$ of $X\bs\Int(\Fix(f))$. For a given $\ep$, choose $\delta>0$  such that if $d'(x',x'') < \delta$, then $d'(f(x'),f(x''))<\ep/3$; then choose $\alpha < \min(\ep/3,\delta)$ and $\ep_j<\alpha$ and let $(x_0=x, x_1, \ldots, x_n=x)$ be an $\ep_j$-chain which stays entirely within $\alpha$ of $X\bs\Int(\Fix(f))$.  For each $x_i, 0<i<n$, there is a point $\tilde x_i \in X\bs\Int(\Fix(f))$ with $d'(x_i,\tilde x_i)<\alpha$; then $(x_0=x, \tilde x_1, \ldots,\tilde x_{n-1}, x_n=x)$ is an $\ep$-chain that stays entirely in $X\bs\Int(\Fix(f))$.  Since $\ep$ was arbitrary, we have $x\in\crec_{X\backslash\Int(\Fix(f))}(f)$, contrary to assumption.

Thus there exist an $\ep_0>0$ and an $\alpha>0$ such that for any $\ep\le\ep_0$, any  $\ep$-chain from $x$ to itself must pass through  $\Int_\alpha(\Fix(f))$.
Now choose an $\ep<\min(\ep_0,\alpha)$ and let $(x_0=x, x_1, \dots, x_n=x)$ be an $\ep$-chain from $x$ to itelf.  Let $m$ be the smallest index such that $x_m\in\Int_\alpha(\Fix(f))$, and let $m+k$ be the smallest index greater than $m$ such that $x_{m+k}\not\in\Fix(f)$.  Then $f(x_i)=x_i$ for $i=m,\ldots,m+k-1$, so we have that $\sum_{i=1}^{n} d'(f(x_{i-1}),x_i) \ge \sum_{i=m+1}^{m+k} d'(f(x_{i-1}),x_i) = \sum_{i=m+1}^{m+k} d'(x_{i-1},x_i) \ge d'(x_m,x_{m+k}) \ge \alpha$.  Thus $x\not\in\scrwo_{d'}(f)$, as desired.

Next we show that $ \Fix(f) \cup \crec_{X\backslash\Int(\Fix(f))}(f) \subset \mane(f)$.  
Let $d$ be a metric on $X$ compatible with the topology.
It is obvious that any fixed point is in $\mane(f)$, so assume that
for any $\ep>0$, there is an $(\ep,f,d)$-chain from $x$ to itself lying in ${X\backslash\Int(\Fix(f))}$.  We will use the alternative definition for $\mane(f)$ from Theorem~\ref{thm:altdefmane}.  Let $\prodset D$ be a
closed neighborhood  of the diagonal; we must find  a closed symmetric neighborhood $\prodset{N}$ of the diagonal and an integer $n>0$ such that $\prodset{N}^{3^n} \subset \prodset{D}$ and there is an $(\prodset{N},f)$-chain of length $n$ from $x$ to itself.  
Choose $\ep$ such that $\tube(4\ep) \subset \prodset D$, choose $\delta \le \frac\ep2$ such that if $d(z_1,z_2)\le\delta$, then $d(f(z_1),f(z_2))\le \frac\ep2$,
and let $(x_0=x, x_1,\dots, x_n=x)$ be a $\delta$-chain from $x$ to itself contained in $ X\backslash\Int(\Fix(f))$; by starting with a  $\frac\delta2$-chain and perturbing the points slightly if necessary, we can  assume that in fact $(x_0=x, x_1,\dots, x_n=x)$ is contained in $X\backslash \Fix(f)$.

For $1\le i \le n$, let $C_i = B_d(f(x_{i-1});\delta_C) \cup B_d(x_i;\delta_C)$, where  $\delta_C$ will be chosen later.
  To construct $\prodset N$, we will need to ensure that the $C_i$'s are pairwise disjoint; thus we need to ensure that for $i<j$, we have $x_i\ne x_j$, $x_i \ne f(x_{j-1})$, $f(x_{i-1}) \ne x_j$, and $f(x_{i-1}) \ne f(x_{j-1})$.  First, in the cases $x_i=x_j$, $f(x_{i-1}) = x_j$, and $f(x_{i-1}) = f(x_{j-1})$, we can shorten the chain to $(x_0,\dots,x_{i-1},x_j,x_{j+1}\dots,x_n)$ and still have a $\delta$-chain from $x_0=x$ to $x_n=x$.  
Now consider the case $x_i = f(x_{j-1})$; we must have $j>i+1$, since $x_{j-1}$ is not fixed.  Let $i'$ be the smallest index such that $x_i' = f(x_{j-1})$ for some $j>i+1$, and let $j'$ be the largest such $j$ for $i'$.  Then the shortened chain $(x_0=x,x_1,\ldots,x_{i'-1},x_{i'},x_{j'+1},x_{j'+2},\ldots,x_n=x)$ is an $\ep$-chain, since $d(f(x_{i'}),x_{j'+1}) \le d(f(x_{i'}),f(x_{j'})) + d(f(x_{j'}),x_{j'+1}) = d(f^2(x_{j'-1}),f(x_{j'})) + d(f(x_{j'}),x_{j'+1}) \le \frac\ep2 + \delta \le \ep$.  We may have to perform this truncation several times, but, by construction, the consecutive terms $x_{i'}$, $x_{j'+1}$ will remain, and thus we will end up with an $\ep$-chain, which we will still denote by  $(x_0=x, x_1,\dots, x_n=x)$.

Choose $\delta_C \le \ep$ small enough that the collection $\{ C_i : 1\le i \le n \}$ is pairwise disjoint, and choose $\ep_0 < \min_{i\ne j} d(C_i,C_j)$;
 we can assume that $\ep_0 \le \ep$ as well.  Now define $\prodset N = \tube(\frac{\ep_0}{3^n}) \bigcup (\bigcup_{i=1}^n C_i \times C_i)$.  Since $(f(x_{i-1}),x_i)$ is in $C_i\times C_i$, we have that $(x_0=x,x_1,\ldots,x_n=x)$ is an $(\prodset{N},f)$-chain.  To see that $\prodset{N}^{3^n} \subset \prodset{D}$, let $z_0, z_1,\ldots,z_{3^n}$ be a sequence with $(z_{j-1},z_j)\in\prodset{N}$ for $1\le j\le 3^n$; we want to show that $d(z_0,z_{3^n}) \le 4\ep$.  Since the $C_i$'s  are more than $\ep_0$ apart, there exists at most one $C_i$ such that there is a pair $(z_j,z_{j+1})$ in the chain in $C_i\times C_i$; thus any other two consecutive points must be within $\frac{\ep_0}{3^n}$ of each other.  So $d(z_0,z_{3^n}) \le 3^n\cdot \frac{\ep_0}{3^n} + \max\{\diam(C_i)\} \le \ep_0 + \ep + 2\delta_C \le 4\ep.$

\end{proof}

\begin{proof}[Proof of Theorem~\ref{thm:main}(\ref{item:mane})]

We first use Theorem~\ref{thm:mfpt} to show that $\mane(f\times g) \supset \mane(f)\times\mane(g)$.  (This also follows from Theorem~\ref{thm:main}(\ref{item:scr-prod}).)  Take any point $(x,y)\in \mane(f)\times\mane(g)$.  If either $x$ or $y$ is fixed, then $(x,y)$ is clearly in $\mane(f\times g)$, so assume that $x\in\crec_{X\backslash\Int(\Fix(f))}(f)$ and $y\in\crec_{Y\backslash\Int(\Fix(g))}(g)$.  For any $\ep>0$, there exist an $(\ep/2,f,d_X)$-chain $(x_0=x,\ldots,x_m=x)$ in ${X\backslash\Int(\Fix(f))}$ and an $(\ep/2,g,d_Y)$-chain $(y_0=y,\ldots,y_n=y)$ in ${Y\backslash\Int(\Fix(g))}$.  We may assume that $m=n$ (if not, concatenate the first chain with itself $n$ times and the second with itself $m$ times to get two chains of equal length).  Then the product chain $((x_0,y_0)=(x,y),\ldots,(x,y))$ is an $(\ep,f\times g,\liftd)$-chain in $(X\times Y)\backslash\Int(\Fix(f\times g))$.  Since $\ep$ was arbitrary, we have $(x,y)\in \mane(f\times g)$.

The following example shows that the inclusion can be strict.

\begin{ex}[\cite{GRSCR}*{Example~3.8}]\label{ex:strictm}
Let $X$ be the circle with the usual topology, and let $f:X\to X$ be a map fixing every point on the closed left semicircle and moving points on the open right semicircle counterclockwise.  Then by Theorem~\ref{thm:mfpt}, $\mane(f)$ is the closed left semicircle.  Let $Y$ be the set $\{0,1\}$ and $g:Y\to Y$ the permutation switching the two points; then $\mane(g)=Y$.  Since $f\times g$ has no fixed points, $\mane(f\times g) = \crec(f\times g) = X\times Y$, which strictly contains $\mane(f)\times\mane(g)$.  (One can show that if we define the metric $D$ on $X\times Y$ by giving $X\times\{0\}$ the usual circle metric and  $X\times\{1\}$ the usual metric on the left semicircle and the metric induced by the Minkowski ?-function on the right semicircle, we have $\scrwo_D(f\times g)=X\times Y$.)

\end{ex}

\end{proof}

The generalized recurrent set was originally defined in terms of Lyapunov functions, and Fathi and Pageault showed in \cite{FP} that the strong chain recurrent set can be defined in terms of Lipschitz Lyapunov functions, so we can restate some of our main results.
Let $\lyf:X\to\R$ be a Lyapunov function for $f$ (that is, $\lyf(f(x))\le \lyf(x)$ for all $x$).  Following the notation in \cite{FP}, denote by $N(\lyf)$ the set of neutral points, $N(\lyf) = \{x\in X:\lyf(f(x))=\lyf(x)\}$.  Let $L(f)$ be the set of continuous Lyapunov functions for $f$, and let $L_{d_X}(f)$ be the set of Lipschitz (with respect to the metric $d_X$) Lyapunov functions for $f$.
  Since $\gr(f)=\bigcap_{\lyf\in L(f)} N(\lyf)$ \cite{A,AA,Auslander} and $\scrwo_{d_X}(f)=\bigcap_{\lyf\in L_{d_X}(f)} N(\lyf)$ \cite{FP}, we have the following corollaries to Theorem~\ref{thm:main}.

\begin{cor}
Let $f:X\to X$ and $g:Y\to Y$ be continuous maps of compact metric spaces, with metric $d_X$ and $d_Y$, respectively. 
	\begin{enumerate}

	\item A point $(x,y)$ is in $\bigcap_{{\lyfg}\in L(f\times g)}N(\lyfg)$ if and only if $x$ is in $\bigcap_{\lyf\in L(f)} N(\lyf)$ and $y$ is in $\bigcap_{\lyg\in L(g)} N(\lyg)$.
	\item  A point $(x,y)$ is in $\bigcap_{{\lyfg}\in L_{\liftd}(f\times g)}N(\lyfg)$ if and only if $x$ is in $\bigcap_{\lyf\in L_{d_X}(f)} N(\lyf)$ and $y$ is in $\bigcap_{\lyg\in L_{d_Y}(g)} N(\lyg)$.

	\end{enumerate}
\end{cor}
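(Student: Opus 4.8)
The plan is to recognize that each displayed intersection of neutral sets is, by the Lyapunov characterizations already quoted, exactly one of the recurrent sets computed in Theorem~\ref{thm:main}, so that the corollary is simply a translation of parts~(\ref{item:gr}) and~(\ref{item:scr-prod}) into the language of Lyapunov functions. No new dynamical input is needed; the entire content sits in Theorem~\ref{thm:main}, and the argument is a three-step chain of equalities in each case.

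For part~(1), I would first invoke the identity $\gr(h)=\bigcap_{\psi\in L(h)} N(\psi)$ (valid for any continuous self-map $h$ of a compact metric space, by \cite{A,AA,Auslander}) applied to $h=f\times g$, giving $\bigcap_{\lyfg\in L(f\times g)} N(\lyfg) = \gr(f\times g)$. By Theorem~\ref{thm:main}(\ref{item:gr}) this equals $\gr(f)\times\gr(g)$, and applying the same identity to $f$ and to $g$ separately rewrites the two factors as $\gr(f)=\bigcap_{\lyf\in L(f)} N(\lyf)$ and $\gr(g)=\bigcap_{\lyg\in L(g)} N(\lyg)$. Since $(x,y)$ lies in a product $A\times B$ if and only if $x\in A$ and $y\in B$, the three equalities together yield the stated biconditional. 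For part~(2) I would run the identical argument with Easton's strong chain recurrent set in place of $\gr$: Fathi and Pageault's identity $\scrwo_{d}(h)=\bigcap_{\psi\in L_{d}(h)} N(\psi)$ \cite{FP}, applied to $h=f\times g$ with $d=\liftd$, gives $\bigcap_{\lyfg\in L_{\liftd}(f\times g)} N(\lyfg) = \scrwo_{\liftd}(f\times g)$; Theorem~\ref{thm:main}(\ref{item:scr-prod}) rewrites this as $\scrwo_{d_X}(f)\times\scrwo_{d_Y}(g)$; and applying the identity once more on each factor translates back to the factorwise Lipschitz-Lyapunov intersections.

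The one point that deserves explicit attention, rather than being a genuine obstacle, is that the two characterizations may be invoked verbatim for the product map and the product metric. For part~(1) this is immediate, since $f\times g$ is a continuous self-map of the compact metric space $X\times Y$ and the characterization of $\gr$ is metric-independent. For part~(2) one must note that $\liftd$ is compatible with the product topology (as recorded when it was defined) and that $L_{\liftd}(f\times g)$ is precisely the set of Lyapunov functions for $f\times g$ that are Lipschitz with respect to $\liftd$, so that \cite{FP}'s identity applies with the metric $\liftd$. With that observation in hand, both biconditionals follow formally, and the proof reduces to citing Theorem~\ref{thm:main}(\ref{item:gr}) and~(\ref{item:scr-prod}).
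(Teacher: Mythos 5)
Your proposal is correct and matches the paper's own (implicit) argument exactly: the paper derives the corollary by citing the identities $\gr(f)=\bigcap_{\lyf\in L(f)} N(\lyf)$ and $\scrwo_{d_X}(f)=\bigcap_{\lyf\in L_{d_X}(f)} N(\lyf)$ and then applying Theorem~\ref{thm:main}(\ref{item:gr}) and (\ref{item:scr-prod}), just as you do. No gaps; your extra remark about applying the characterizations to the product map with the metric $\liftd$ is a reasonable bit of added care.
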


Theorem~\ref{thm:main} also gives an easy proof of the following well-known result from number theory.  For real numbers $w$ and $z$, denote by $|w-z|_1$ the difference in their fractional parts; that is, $|w-z|_1 = |w-z| \pmod 1$.

\begin{cor}
Let $\alpha$ and $\beta$ be real numbers.  Then for any $\ep>0$, there are infinitely many positive integers $n$ such that $|n\alpha - n\beta|_1 < \ep$.  
\end{cor}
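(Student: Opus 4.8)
The plan is to realize $\alpha$ and $\beta$ as rotations of the circle and let the product theorem supply the simultaneity. First I would set $X=Y=\R/\Z$ with its standard rotation-invariant metric, let $f\colon X\to X$ be rotation by $\alpha$, $f(x)=x+\alpha$, and let $g\colon Y\to Y$ be rotation by $\beta$, $g(y)=y+\beta$. Both maps are isometries, so every point is chain recurrent; that is, $\crec(f)=X$ and $\crec(g)=Y$. By Theorem~\ref{thm:main}(\ref{item:crec}), $\crec(f\times g)=\crec(f)\times\crec(g)=X\times Y$, so in particular $(0,0)$ is chain recurrent for the product rotation $f\times g$. (One could equally invoke Theorem~\ref{thm:main}(\ref{item:gr}) together with $\gr(f)=X$ and $\gr(g)=Y$.) The point worth emphasizing is that the genuine content of the corollary is exactly this simultaneity: separate recurrence of $0$ under $f$ and under $g$ only yields return times that may be mutually incompatible, whereas the claim demands a single integer $n$ serving both coordinates at once, which is precisely what the product statement provides.

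Next I would convert the recurrence of $(0,0)$ into a genuine integer return time, using that $f\times g$ is itself an isometry of the compact torus $(X\times Y,\liftd)$. The forward orbit $\{(n\alpha,n\beta):n\ge 0\}$ lies in a compact space, so for any $\delta>0$ the pigeonhole principle gives indices $i<j$ with the two points within $\delta$; applying the invertible isometry $(f\times g)^{-i}$ (rotation by $(-i\alpha,-i\beta)$) produces $n=j-i>0$ with $\liftd\bigl((n\alpha,n\beta),(0,0)\bigr)\le\delta$. Since $\liftd\bigl((n\alpha,n\beta),(0,0)\bigr)=|n\alpha|_1+|n\beta|_1$, the triangle inequality in the circle metric gives $|n\alpha-n\beta|_1\le |n\alpha|_1+|n\beta|_1\le\delta$, so taking $\delta=\ep$ yields one admissible $n$. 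To get infinitely many such $n$, I would either apply the pigeonhole argument to ever-longer orbit segments to force arbitrarily large return times, or let $\delta\to 0$: if $\alpha-\beta\notin\mathbb{Q}$ the return times are necessarily unbounded, and if $\alpha-\beta\in\mathbb{Q}$ then $|n\alpha-n\beta|_1=0$ for every $n$ that is a multiple of a common denominator.

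The hard part will be the bridge in the second step. Theorem~\ref{thm:main} only delivers chain recurrence of $(0,0)$, which a priori guarantees $\ep$-chains rather than a true orbit return, and an $\ep$-chain says nothing directly about an actual integer $n$. Crossing this gap is what forces the use of the extra structure that $f\times g$ is an isometry, for which chain recurrence and honest orbit recurrence coincide; without that upgrade the number-theoretic conclusion would not follow from chain recurrence alone. Everything else (the identification of the metric, the triangle inequality, and the infinitude of solutions) is routine.
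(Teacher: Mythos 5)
Your proof is correct, but it does not actually run through the product theorem the way the paper's does, and it is worth seeing why. The paper invokes Theorem~\ref{thm:main}(\ref{item:scr-prod}), not (\ref{item:crec}): since every point is \emph{strong} chain recurrent for any rotation, $(0,0)\in\scrwo_{D_{d,d}}(R_\alpha\times R_\beta)$, and then the isometry property converts a strong $\ep$-chain of length $n$ from $(0,0)$ to itself directly into the bound $D_{d,d}\bigl((R_\alpha\times R_\beta)^n(0,0),(0,0)\bigr)\le\ep$, because each error is propagated without amplification and the \emph{sum} of the errors is at most $\ep$. That is exactly where ``strong'' matters: an ordinary $\ep$-chain of length $n$ only yields drift $n\ep$ under an isometry, so your appeal to Theorem~\ref{thm:main}(\ref{item:crec}) cannot be upgraded to an orbit return. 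You sense this correctly (``an $\ep$-chain says nothing directly about an actual integer $n$''), but the way you cross the gap --- the pigeonhole argument on the forward orbit of $(0,0)$ in the compact torus, followed by applying $(f\times g)^{-i}$ --- is a complete and correct proof all by itself, which makes your first step logically superfluous: you never use the chain recurrence of $(0,0)$, only the compactness of the torus and the fact that $f\times g$ is an isometry. So what you have is essentially the classical elementary (Dirichlet-style) proof of simultaneous recurrence for rotations, whereas the paper's point is to exhibit the corollary as a genuine consequence of the strong-chain-recurrence product theorem, where the synchronization of the return times is supplied by Lemma~\ref{lem:IP} and the IP-set recurrence argument rather than by pigeonhole. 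Your argument is more elementary and self-contained; the paper's shows that Theorem~\ref{thm:main}(\ref{item:scr-prod}) already encodes the number theory. Your treatment of the infinitude of solutions (letting $\delta\to 0$, with the separate case $\alpha-\beta\in\mathbb{Q}$) is fine; the ``ever-longer orbit segments'' alternative as stated does not obviously force $j-i$ to be large, so keep the $\delta\to 0$ version.
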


\begin{proof}
Let $S^1$ be the circle, considered as $\R/\Z$, with metric $d(x_1,x_2) = |x_1-x_2|_1$, and let $R_\theta:S^1\to S^1$ be rotation by some $\theta$, $R_\theta(x) = x+\theta \pmod 1$.    It is clear that $x$ is strong chain recurrent for $R_\theta$ for every $x$ and every $\theta$, and so, by Theorem~\ref{thm:main}(\ref{item:scr-prod}), $(0,0)\in\scrwo_{D_{d,d}}(R_\alpha\times R_\beta)$.
Let $((x_0,y_0)=(0,0),\ldots,(x_n,y_n)=(0,0))$ be a strong $(\ep,R_\alpha\times R_\beta,D_{d,d})$-chain; since $R_\alpha\times R_\beta$ is an isometry, we have that $D_{d,d}((x_n,y_n),(R_\alpha\times R_\beta)^n(x_0,y_0))\le\ep$.
Thus $\ep\ge D_{d,d}((R_\alpha\times R_\beta)^n(0,0),(0,0)) = |n\alpha|_1 + |n\beta|_1 \ge |n\alpha-n\beta|_1$.  Since this is true for any $\ep$, there must be infinitely many such $n$.
\end{proof}

A point $x$ is \emph{(positively) recurrent} for $f:X\to X$ if, for any neighborhood $U$ of $x$, $x$ returns to $U$, that is, $f^n(x)\in U$ for some  $n>0$.  Thus any recurrent point is nonwandering.  A recurrent point $x$ is \emph{product recurrent} if for any recurrent point $y$ of any map $g:Y\to Y$, the point $(x,y)$ is recurrent for the product map $f\times g$.  It is well known that a point is product recurrent if and only if it is distal \cite{Furst}*{Theorem~9.11}.
 We have a somewhat analogous result for nonwandering points.
 
 \begin{defn}
 Let $f:X\to X$ be a continuous map of a compact metric space.  A point $x$ in $X$ is \emph{product nonwandering} if, for any continuous map $g:Y\to Y$ of a compact metric space and any nonwandering point $y$ for $g$, the point $(x,y)$ is nonwandering for $f\times g$.  The map $f$ is \emph{locally (topologically) mixing at $x$} if for any neighborhood $U$ of $x$, there exists an integer $N$ such that $f^n(U)\cap U \ne\emptyset$ for all $n\ge N$.
 \end{defn}

Any topologically mixing map (such as the doubling map on the circle) is locally mixing at every point.  The disjoint union of two circles, with the map given by doubling on each circle, is not topologically mixing, but it is locally mixing at every point; the same is true for the identity map on any nontrivial space.  More generally, a map is locally mixing at any fixed point.

\begin{ex}\label{ex:spiral}
Let $X$ be the unit disk in $\R^2$ and let $f:X\to X$ be a map that fixes the center $(0,0)$ and the north pole $(0,1)$,  moves other interior points in a clockwise spiral out towards the boundary circle, and moves points on the circle clockwise toward the north pole.    Then the nonwandering set consists of the center and the boundary circle, and $f$ is locally mixing at every nonwandering point.
\end{ex}

\begin{thm}\label{thm:prodnw}
Let $f:X\to X$ be a continuous map of a compact metric space to itself.  A point $x$ in $X$ is product nonwandering if and only if $f$ is locally (topologically) mixing at $x$.  
\end{thm}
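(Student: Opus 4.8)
The plan is to analyze, for basic neighborhoods $U \times V$ of $(x,y)$, the two \emph{return-time sets} $R_f(U) = \{n \ge 1 : f^n(U)\cap U \ne \emptyset\}$ and $R_g(V) = \{n\ge 1 : g^n(V)\cap V\ne\emptyset\}$. Since $(f\times g)^n(U\times V)\cap(U\times V) = (f^n(U)\cap U)\times(g^n(V)\cap V)$, the point $(x,y)$ is nonwandering for $f\times g$ exactly when every such pair satisfies $R_f(U)\cap R_g(V)\ne\emptyset$ (basic neighborhoods suffice, since a smaller box returns only if a larger one does). Consequently $x$ \emph{fails} to be product nonwandering precisely when there exist a map $g$, a point $y\in\nw(g)$, and neighborhoods $U\ni x$, $V\ni y$ with $R_f(U)\cap R_g(V)=\emptyset$. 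I would prove the two implications against this reformulation.

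For the forward implication, suppose $f$ is locally mixing at $x$ and let $y\in\nw(g)$. Given $U$ and $V$, local mixing yields an $N$ with $\{n\in\nn : n\ge N\}\subseteq R_f(U)$, so it suffices to exhibit an element of $R_g(V)$ that is at least $N$. I would isolate this as a lemma: \emph{for any continuous map and any nonwandering point, the return-time set of every neighborhood is infinite}. To prove it, suppose $R_g(V)$ were finite for some $V$; passing to the nested neighborhoods $V_j = V\cap B_{d_Y}(y;1/j)$, the sets $R_g(V_j)$ are nonempty, nested decreasing, and contained in the finite set $R_g(V)$, hence share a common element $n_0$. Thus there are points $z_j\in V_j$ with $g^{n_0}(z_j)\in V_j$; letting $j\to\infty$ and using continuity forces $g^{n_0}(y)=y$, so $y$ is periodic and every neighborhood has infinitely many return times, a contradiction. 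With the lemma, $R_g(V)$ is infinite, meets $\{n\ge N\}$, and the forward implication follows.

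For the converse I argue by contraposition. If $f$ is not locally mixing at $x$, fix a neighborhood $U$ witnessing this, so $A = \{n\ge 1 : f^n(U)\cap U=\emptyset\}$ is infinite. By the reformulation it suffices to build a compact metric space $Y$, a continuous $g:Y\to Y$, a point $y\in\nw(g)$, and a neighborhood $V$ with $R_g(V)\subseteq A$: then for each $n$ either $n\in A$ (so $f^n(U)\cap U=\emptyset$) or $n\notin A$, whence $n\notin R_g(V)$ (so $g^n(V)\cap V=\emptyset$), and $(x,y)$ is wandering. I would realize each $b_k$ in a sparse subsequence $b_1<b_2<\cdots$ of $A$ as a return time and nothing else. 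Let $Y$ consist of a fixed sink $p$, the point $y$, and, for each $k$, a finite orbit segment $q_{k,0}\mapsto q_{k,1}\mapsto\cdots\mapsto q_{k,b_k}\mapsto p$ whose endpoints $q_{k,0},q_{k,b_k}$ tend to $y$ while its interior stays a fixed distance $\ge\rho$ from $y$, and set $g(y)=p$. Then $y$ is nonwandering, since $q_{k,0}\in V$ and $g^{b_k}(q_{k,0})=q_{k,b_k}\to y$; and for $V = B_{d_Y}(y;\ep)$ with $\ep<\rho$ the only orbit points in $V$ are $y$ and the endpoints $q_{k,0},q_{k,b_k}$ for large $k$, so tracing their orbits shows the only returns occur at the times $b_k$, i.e. $R_g(V)\subseteq A$.

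I expect the main obstacle to be this converse construction, and specifically two checks. First, that $Y$ is genuinely a compact metric space with $g$ continuous: because $b_k\to\infty$, the segment interiors accumulate, so one must include the appropriate forward and backward limit points of the segments (e.g. absorbing them into the sink) and verify they stay bounded away from $y$; this is routine but fiddly, and can alternatively be arranged inside a subshift. Second — the real content — that no \emph{spurious} return times arise: every point of a small $V$ other than the endpoints must leave $V$ forever or be absorbed by $p$, so that $R_g(V)$ is exactly $\{b_k : k\ge K(\ep)\}$ and meets $A$ and nothing else. The forward direction, by contrast, reduces entirely to the infinite-return-time lemma, which is short.
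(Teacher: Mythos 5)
Your outline follows the paper's proof almost exactly: the same reduction to return-time sets of basic boxes, the same use of the fact that a nonwandering point has infinitely many return times for every neighborhood (the paper cites this from de Vries; your compactness proof of it is correct and self-contained), and the same contrapositive strategy for the converse, realizing a prescribed infinite set of times as the return-time set of a neighborhood of a nonwandering point (the paper's Lemma on $g_{\mathcal M}$ even realizes $R_g(V)$ as \emph{exactly} the given set, though containment suffices, as you note).

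The one place your sketch would fail if executed literally is the compactification of $Y$ in the converse, which you correctly identify as the real content but then resolve incorrectly: you cannot absorb the limit points of the segment interiors into the fixed sink $p$. If all but finitely many interior points $q_{k,j}$ lay within $\ep$ of $p$, then in particular $q_{k,b_k-1}\to p$ as $k\to\infty$; but $g(q_{k,b_k-1})=q_{k,b_k}\to y$, while $g(p)=p\ne y$, so $g$ would be discontinuous at $p$. The forward limit points of the segment \emph{heads} can go to the sink, but the segment \emph{tails} must accumulate on something that still maps toward $y$. The paper's construction handles this by including a ``spine'': a backward-infinite orbit $\dots\mapsto s_3\mapsto s_2\mapsto s_1\mapsto y\mapsto p$ converging to the sink, arranged so that the point of segment $k$ that is $j$ steps from its end converges (as $k\to\infty$) to $s_j$; continuity then holds everywhere, and the spine points lie outside $V$ so no spurious returns arise. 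Your alternative suggestion of working in a subshift does avoid the issue cleanly (the orbit closure of the points $10^{b_k-1}10^\infty$ under the shift, with $V$ the cylinder $[1]$, is exactly this construction in symbolic form), so the gap is repairable, but the explicit planar construction needs the spine, not just the sink.
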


\begin{proof}
We prove the ``if'' direction first. Take any $y\in\nw(g)$ and any neighborhood $W$ of $(x,y)$ in $X\times Y$.  Let $U$ and $V$ be neighborhoods of $x$ in $X$ and $y$ in $Y$, respectively, such that $U\times V \subset W$, and choose $N$ such that  $f^n(U)\cap U \ne\emptyset$ for all $n\ge N$. The set of return times for $V$, $\{m>0:g^m(V)\cap V\ne\emptyset\}$, is infinite \cite{vries}*{Proposition~4.3.2}.  Thus there exists a return time $m\ge N$, so we have $(f\times g)^m(W)\cap W \supset (f\times g)^m(U\times V) \cap (U\times V) = (f^m(U) \cap U)\times (g^m(V) \cap V) \ne \emptyset$.  Since $W$ was arbitrary, $(x,y)$ is nonwandering for $f\times g$.

To prove the ``only if'' direction, we use the following lemma, which may be of independent interest.

\begin{lemma}\label{lem:nwnotprod}
Let $\mset$ be any infinite set of natural numbers.  Then there exist a continuous map $g_\mset:Y_\mset\to Y_\mset$ of a compact metric space to itself, a point $y\in \nw(g_\mset)$, and a neighborhood $V$ of $y$ such that $\{m>0:g_\mset^m(V)\cap V\ne\emptyset\} = \mset$.
\end{lemma}

\begin{proof}[Proof of Lemma~\ref{lem:nwnotprod}]

Let $\mset = \{m_i\}_{i=1}^\infty$, with $m_1<m_2<\dots$.  We construct the space $Y_\mset = \bigcup_{i=0}^\infty Y_i$ as a subset of $\R^2$, where the spaces $Y_i$ and the map $\gm$ are defined as follows; see Figure ~\ref{fig:gm} for an example with $m_1=3$, $m_2=5$, and $m_3=6$.

Define $Y_0 = \{(0,0)\}\cup \{(\frac1n,0) : 1\le n <\infty\}$, and define
\begin{eqnarray*}
\gm(\frac1n,0)  &= & (\frac1{n-1},0)\ (n>1) \\
\gm(1,0) &=& (0,0) \\
\gm(0,0) &=&(0,0).
\end{eqnarray*}

For $i\ge1$, define $Y_i = \{(\frac1n,\frac1{i}): 1 \le n \le m_i\}$, and define
\begin{eqnarray*}
\gm(\frac1n,\frac1{i})  &= & (\frac1{n-1},\frac1{i}) \ (n>2) \\
\gm(\frac12,\frac1{i})  &= & (1,0) \\
\gm(1,\frac1{i}) &=&(\frac1{m_i},\frac1{i}) 
\end{eqnarray*}
(unless $m_i=1$, which can happen only if $i=1$; then $\gm(1,1)=(1,0)$).

The set $Y_\mset$ is compact and the map $\gm$ is continuous.  Observe that $\gm^k(1,0)=(0,0)$ for $k\ge1$, and that $\gm^{m_i}(1,\frac1i)=(1,0)$.  The point $y=(1,0)$ is nonwandering, because every neighborhood of $y$ contains a point of the form $(1,\frac1i)$.  The set $V =  \{y\} \cup \{(1,\frac1i): 1 \le i<\infty\}$ is a neighborhood of $y$ such that $\{m>0:g_\mset^m(V)\cap V\ne\emptyset\} = \mset$.

\end{proof}

\begin{figure}
\begin{pspicture}(-1,-0.5)(12,5.5)
\parametricplot[plotstyle=dots,plotpoints=8]{1}{8}{10 t div 0}
\parametricplot[plotstyle=dots,plotpoints=3]{1}{3}{10 t div 5}
\parametricplot[plotstyle=dots,plotpoints=5]{1}{5}{10 t div 2.5}
\parametricplot[plotstyle=dots,plotpoints=6]{1}{6}{10 t div 1.67}
\psdots(0,0)
\rput(.7,0){$\cdots$}
\rput(10,.7){$\vdots$}
\rput(5,.7){$\vdots$}
\rput(3.33,.7){$\vdots$}
\rput(2.5,.7){$\vdots$}
\rput(2,.7){$\vdots$}
\rput(1.5,.7){$\vdots$}
\rput(.7,.7){$\vdots$}

\rput(11,0){$Y_0$}
\rput(11,5){$Y_1$}
\rput(11,2.5){$Y_2$}
\rput(11,1.67){$Y_3$}

\psset{linewidth=.5pt}
\pnode(5,0){A}
\pnode(10,0){B} \psset{nodesep=3pt} \ncarc{->}{A}{B}
\pnode(0,0){C} \ncarc{->}{B}{C}
\pnode(3.33,0){D} \pnode(2.5,0){E}
\ncarc{->}{E}{D} \ncarc{->}{D}{A}
\nccircle{->}{C}{.2}

\pnode(3.33,5){A1} \pnode(5,5){B1} \pnode(10,5){C1}
\ncarc{->}{C1}{A1} \ncarc{->}{A1}{B1} \ncarc{->}{B1}{B}

\pnode(2,2.5){a2} \pnode(2.5,2.5){b2} \pnode(3.33,2.5){c2} \pnode(5,2.5){d2} \pnode(10,2.5){e2}
\ncarc{->}{e2}{a2} \ncarc{->}{a2}{b2} \ncarc{->}{b2}{c2} \ncarc{->}{c2}{d2} \ncarc{->}{d2}{B}

\pnode (1.67,1.67){aa2} \pnode(2,1.67){a2} \pnode(2.5,1.67){b2} \pnode(3.33,1.67){c2} \pnode(5,1.67){d2} \pnode(10,1.67){e2}
\ncarc{->}{e2}{aa2} \ncarc{aa2}{a2} \ncarc{->}{a2}{b2} \ncarc{->}{b2}{c2} \ncarc{->}{c2}{d2} \ncarc{->}{d2}{B}

\end{pspicture}

\caption{The map  $g_\mset:Y_\mset\to Y_\mset$ from Lemma~\ref{lem:nwnotprod}}
\label{fig:gm}
\end{figure}

Now assume that $f$ is not locally mixing at $x$; we will show that $x$ is not product nonwandering.  Since $f$ is not locally mixing at $x$, there exists a neighborhood $U$ of $x$ such that the set of non-return times, $\{n>0:f^n(U)\cap U = \emptyset\}$, is infinite; let this set be $\mset$.  Consider the map $f\times g_\mset:X\times Y_\mset\to X\times Y_\mset$.  Since $V$ returns to itself under $g_\mset$ only at times in $\mset$, and $U$ returns to itself under $f$ only at times not in $\mset$, the neighborhood $U\times V$ of $(x,y)$ never returns to itself under $f\times g_\mset$.  Thus $(x,y)$ is not nonwandering, and so $x$ is not product nonwandering.

\end{proof}

So, for example, if $f$ is a nontrivial rotation of the circle, then every point is product recurrent but not product nonwandering.  Conversely, in Example~\ref{ex:spiral}, every nonwandering point is product nonwandering, but only the fixed points are product recurrent.


\begin{bibdiv}
\begin{biblist}

\bib{A}{book}{
      author={Akin, Ethan},
       title={The general topology of dynamical systems},
      series={Graduate Studies in Mathematics},
   publisher={American Mathematical Society, Providence, RI},
        date={1993},
      volume={1},
        ISBN={0-8218-3800-8},
      review={\MR{1219737 (94f:58041)}},
}

\bib{ARec}{book}{
      author={Akin, Ethan},
       title={Recurrence in topological dynamics: {F}urstenberg families and
  {E}llis actions},
      series={The University Series in Mathematics},
   publisher={Plenum Press, New York},
        date={1997},
        ISBN={0-306-45550-1},
         url={http://dx.doi.org/10.1007/978-1-4757-2668-8},
      review={\MR{1467479}},
}

\bib{AA}{article}{
      author={Akin, Ethan},
      author={Auslander, Joseph},
       title={Generalized recurrence, compactifications, and the {L}yapunov
  topology},
        date={2010},
        ISSN={0039-3223},
     journal={Studia Math.},
      volume={201},
      number={1},
       pages={49\ndash 63},
         url={http://dx.doi.org/10.4064/sm201-1-4},
      review={\MR{2733274 (2012a:37013)}},
}

\bib{Auslander}{article}{
      author={Auslander, Joseph},
       title={Generalized recurrence in dynamical systems},
        date={1964},
     journal={Contributions to Differential Equations},
      volume={3},
       pages={65\ndash 74},
      review={\MR{0162238 (28 \#5437)}},
}

\bib{Berge}{book}{
      author={Berge, Claude},
       title={Topological spaces},
   publisher={Dover Publications, Inc., Mineola, NY},
        date={1997},
        ISBN={0-486-69653-7},
        note={Including a treatment of multi-valued functions, vector spaces
  and convexity, Translated from the French original by E. M. Patterson,
  Reprint of the 1963 translation},
      review={\MR{1464690}},
}

\bib{CN}{article}{
      author={Coven, Ethan~M.},
      author={Nitecki, Zbigniew},
       title={Nonwandering sets of the powers of maps of the interval},
        date={1981},
        ISSN={0143-3857},
     journal={Ergodic Theory Dynamical Systems},
      volume={1},
      number={1},
       pages={9\ndash 31},
      review={\MR{627784 (82m:58043)}},
}

\bib{vries}{book}{
      author={de~Vries, J.},
       title={Topological dynamical systems: an introduction to the dynamics of
  continuous mappings},
   publisher={De Gruyter, Berlin, Boston},
        date={2014},
        ISBN={3110340739},
}

\bib{E}{incollection}{
      author={Easton, Robert},
       title={Chain transitivity and the domain of influence of an invariant
  set},
        date={1978},
   booktitle={The structure of attractors in dynamical systems ({P}roc.
  {C}onf., {N}orth {D}akota {S}tate {U}niv., {F}argo, {N}.{D}., 1977)},
      series={Lecture Notes in Math.},
      volume={668},
   publisher={Springer, Berlin},
       pages={95\ndash 102},
      review={\MR{518550 (80j:58051)}},
}

\bib{FP}{article}{
      author={Fathi, Albert},
      author={Pageault, Pierre},
       title={Aubry-{M}ather theory for homeomorphisms},
        date={2015},
        ISSN={0143-3857},
     journal={Ergodic Theory Dynam. Systems},
      volume={35},
      number={4},
       pages={1187\ndash 1207},
         url={http://dx.doi.org/10.1017/etds.2013.107},
      review={\MR{3345168}},
}

\bib{Franks}{incollection}{
      author={Franks, John},
       title={A variation on the {P}oincar\'e-{B}irkhoff theorem},
        date={1988},
   booktitle={Hamiltonian dynamical systems ({B}oulder, {CO}, 1987)},
      series={Contemp. Math.},
      volume={81},
   publisher={Amer. Math. Soc., Providence, RI},
       pages={111\ndash 117},
         url={http://dx.doi.org/10.1090/conm/081/986260},
      review={\MR{986260 (90e:58095)}},
}

\bib{FM}{incollection}{
      author={Frantzikinakis, Nikos},
      author={McCutcheon, Randall},
       title={Ergodic theory: recurrence},
        date={2012},
   booktitle={Mathematics of complexity and dynamical systems. {V}ols. 1--3},
   publisher={Springer, New York},
       pages={357\ndash 368},
         url={http://dx.doi.org/10.1007/978-1-4614-1806-1_23},
      review={\MR{3220681}},
}

\bib{Furst}{book}{
      author={Furstenberg, H.},
       title={Recurrence in ergodic theory and combinatorial number theory},
   publisher={Princeton University Press, Princeton, N.J.},
        date={1981},
        ISBN={0-691-08269-3},
        note={M. B. Porter Lectures},
      review={\MR{603625}},
}

\bib{GIM}{article}{
      author={G{\'o}mez-Rueda, Jos{\'e}~L.},
      author={Illanes, Alejandro},
      author={M{\'e}ndez, H{\'e}ctor},
       title={Dynamic properties for the induced maps in the symmetric
  products},
        date={2012},
        ISSN={0960-0779},
     journal={Chaos Solitons Fractals},
      volume={45},
      number={9-10},
       pages={1180\ndash 1187},
      review={\MR{2979228}},
}

\bib{MR0042109}{article}{
      author={Michael, Ernest},
       title={Topologies on spaces of subsets},
        date={1951},
        ISSN={0002-9947},
     journal={Trans. Amer. Math. Soc.},
      volume={71},
       pages={152\ndash 182},
      review={\MR{0042109}},
}

\bib{OZ}{article}{
      author={Oprocha, Piotr},
      author={Zhang, Guohua},
       title={On weak product recurrence and synchronization of return times},
        date={2013},
        ISSN={0001-8708},
     journal={Adv. Math.},
      volume={244},
       pages={395\ndash 412},
         url={http://dx.doi.org/10.1016/j.aim.2013.05.006},
      review={\MR{3077877}},
}

\bib{Saw}{article}{
      author={Sawada, Ken},
       title={On the iterations of diffeomorphisms without {$C^{0}-\Omega
  $}-explosions: an example},
        date={1980},
        ISSN={0002-9939},
     journal={Proc. Amer. Math. Soc.},
      volume={79},
      number={1},
       pages={110\ndash 112},
         url={http://dx.doi.org/10.2307/2042398},
      review={\MR{560595 (81h:58055)}},
}

\bib{GRSCR}{article}{
      author={Wiseman, Jim},
       title={The generalized recurrent set and strong chain recurrence},
        date={to appear},
     journal={Ergodic Theory Dynam. Systems},
}

\bib{Y}{article}{
      author={Yokoi, Katsuya},
       title={On strong chain recurrence for maps},
        date={2015},
     journal={Annales Polonici Mathematici},
      volume={114},
       pages={165\ndash 177},
}

\end{biblist}
\end{bibdiv}

\end{document}